\author{Liran Shaul}
\address{Department of  Mathematics 
The Weizmann Institute of Science, 
Rehovot 76100, 
Israel}
\email{liran.shaul@weizmann.ac.il}
\newtheorem{thm}[equation]{Theorem}
\newtheorem{cor}[equation]{Corollary}
\newtheorem{prop}[equation]{Proposition}
\newtheorem{lem}[equation]{Lemma}
\theoremstyle{definition}
\newtheorem{dfn}[equation]{Definition}
\newtheorem{rem}[equation]{Remark}
\newtheorem{que}[equation]{Question}
\newcommand{\opn}{\operatorname}
\newcommand{\mrm}[1]{\mathrm{#1}}
\renewcommand{\k}{\Bbbk}
\numberwithin{equation}{section} 
\begin{document}

\title{Reduction of Hochschild cohomology over algebras finite over their center}

\begin{abstract}
We borrow ideas from Grothendieck duality theory to noncommutative algebra, and use them to prove a reduction result for Hochschild cohomology for noncommutative algebras which are finite over their center. This generalizes a result over commutative algebras by Avramov, Iyengar, Lipman and Nayak.
\end{abstract}

\maketitle

\setcounter{section}{-1}
\section{Introduction}

All rings in this note are unital, but not necessarily commutative.
By an $A$-module we shall mean a left $A$-module. If $\k$ is a field, and $A$ is a $\k$-algebra, we will view $A$ as a left $A\otimes_{\k} A^{\opn{op}}$-module, so that $A^{\opn{op}}$ is a right $A^{\opn{op}}\otimes_{\k} A$-module. We will denote by $Z(A)$ be the center of $A$.

In this paper, we study the Hochschild cohomology of a $\k$-algebra $A$, where $\k$ is a field. Given an $A$-bimodule $M$, the $n$-th Hochschild cohmology of $A$ with coefficients in $M$, denoted by $\mrm{HH}^n(A|\k;M)$ is given by $\opn{Ext}^n_{A\otimes_{\k} A^{\opn{op}}}(A,M)$. This has a structure of a left $Z(A)$-module via the left $Z(A)$-action of $M$. Similarly, the $n$-th Hochschild homology $Z(A)$-module of $A$ with coefficients in $M$ is given by $\mrm{HH}_n(A|\k;M):= \opn{Tor}_n^{A\otimes_{\k} A^{\opn{op}}}(A,M)$. The reader is referred to \cite{CE,Lo} for more background on Hochschild homology and cohomology.

In their seminal book \cite{CE}, Cartan and Elinberg showed that when the coefficient bimodule $M$ has a special form, its Hochschild homology and cohomology satisfies a reduction formula which replaces the homological construction over the enveloping algebra by a one over the algebra $A$. More precisely, they have shown (\cite[Proposition IX.4.4]{CE}) that if $A$ is a $\k$-algebra, and $M,N$ are left $A$-modules, then there is an isomorphism
\begin{equation}\label{eqn:classic-c}
\mrm{HH}^n(A|\k;\opn{Hom}_{\k}(M,N)) \cong \opn{Ext}^n_A(M,N),
\end{equation}
and that if $M$ is a left $A$-module and $N$ is a right $A$-module, then there is an isomorphism
\begin{equation}\label{eqn:classic-h}
\mrm{HH}_n(A|\k; M\otimes_{\k} N) \cong \opn{Tor}^n_A(N,M).
\end{equation}
With these formulas in hand, it is natural to ask if similar formulas hold for $\mrm{HH}_n(A|\k;\opn{Hom}_{\k}(M,N))$ and for $\mrm{HH}^n(A|\k ;M\otimes_{\k} N)$. 
A few years ago, this was answered positively in the commutative case. Thus, assume that $A$ is a commutative essentially finite type $\k$-algebra (that is, a localization of a finite type $\k$-algebra). If $f:\k \to A$ is the structure map, then it was shown in \cite[Theorem 4.1]{AILN}, that under some finiteness conditions on $M$, there is a functorial isomorphism
\begin{equation}\label{eqn:com-red-c}
\mrm{HH}^n(A|\k ;M\otimes_{\k} N) \cong \opn{Ext}^n_A(\mrm{R}\opn{Hom}_A(M,f^{!}(\k)),N).
\end{equation}
In addition, it was stated in \cite[Theorem 4.6]{AILN}, and proved in \cite[Theorem 4.1.8]{ILN}, that 
\begin{equation}\label{eqn:com-red-h}
\mrm{HH}_n(A|\k;\opn{Hom}_{\k}(M,N)) \cong \opn{Tor}_n^A(\mrm{R}\opn{Hom}_A(M,f^{!}(\k)),N).
\end{equation}
(actually, the result of \cite{AILN} was more general, allowing $\k$ to be an arbitrary noetherian ring, and then computing derived Hochschild cohomology rather than Hochschild cohomology, but in this paper we will always assume $\k$ to be a field, so we will ignore this issue).

Here, $f^{!}$ is the twisted inverse image pseudofunctor from Grothendieck's duality theory (see the book \cite{RD}). For any essentially finite type $\k$-algebra $A$ with structure map $f:\k \to A$, set $R_A := f^{!}(\k)$. Then the complex $R_A$ is a dualizing complex over $A$ (a notion recalled in Section \ref{sec:background} below).

The main result of this paper (Theorem \ref{thm:main}) is a generalization of (\ref{eqn:com-red-c}) for noncommutative algebras which are finite over their center. 
In our recent paper \cite{Sh}, we gave a new interpretation of the above formulas, which allowed us to deduce many interesting relations between the Hochschild homology and Hochschild cohomology of commutative algebras. This interpretation also led us to Theorem \ref{thm:main} below. Let us explain: first, we showed using adjunction that the above results may be rewritten as (we have switched to derived categorical notation)
\begin{equation}\label{eqn:com-twist-c}
\mrm{R}\opn{Hom}_{A\otimes_{\k} A}(A,M\otimes_{\k} N) \cong \mrm{R}\opn{Hom}_A(\mrm{R}\opn{Hom}_A(M,R_A)\otimes^{\mrm{L}}_A \mrm{R}\opn{Hom}_A(N,R_A),R_A),
\end{equation}
and
\begin{equation}\label{eqn:com-twist-h}
A\otimes^{\mrm{L}}_{A\otimes_{\k} A} (\opn{Hom}_{\k}(M,N)) \cong \mrm{R}\opn{Hom}_A(\mrm{R}\opn{Hom}_A(\mrm{R}\opn{Hom}_A(M,R_A),\mrm{R}\opn{Hom}_A(N,R_A)),R_A).
\end{equation}
Now, given two such $\k$-algebras $A,B$, and a functor 
\[
F:\underbrace{\mrm{D}^{\mrm{b}}_{\mrm{f}}(\opn{Mod} A)\times \mrm{D}^{\mrm{b}}_{\mrm{f}}(\opn{Mod} A) \times \dots \times \mrm{D}^{\mrm{b}}_{\mrm{f}}(\opn{Mod} A)}_n \to \mrm{D}(\opn{Mod} B),
\] 
we defined its twist to be the functor 
$F^{!}:\mrm{D}^{\mrm{b}}_{\mrm{f}}(\opn{Mod} A)\times \mrm{D}^{\mrm{b}}_{\mrm{f}}(\opn{Mod} A) \times \dots \times \mrm{D}^{\mrm{b}}_{\mrm{f}}(\opn{Mod} A) \to \mrm{D}(\opn{Mod} B)$, given by 
\[
F^!(M_1,\dots,M_n) := \mrm{R}\opn{Hom}_B( F( \mrm{R}\opn{Hom}_A(M_1,R_A),\dots, \mrm{R}\opn{Hom}_A(M_n,R_A) ), R_B).
\]
This definition was inspired by Grothendieck's construction of the twisted inverse image functor as a twist of the inverse image functor. Using this definition, and letting $-\otimes^{!}_A-$ and $\opn{Hom}^{!}_A(-,-)$ to be the twists of $-\otimes^{\mrm{L}}_A-$ and $\mrm{R}\opn{Hom}_A(-,-)$ respectively, we see that formulas (\ref{eqn:com-twist-c}) and (\ref{eqn:com-twist-h}) become
\begin{equation}\label{com-twist-formula}
\mrm{HH}^n(A|\k ;M\otimes_{\k} N) \cong H^n(M\otimes^{!}_A N),
\end{equation}
and
\begin{equation}
\mrm{HH}_n(A|\k;\opn{Hom}_{\k}(M,N)) \cong H^n(\opn{Hom}^{!}_A(M,N))
\end{equation}
which bear close resemblance to the classical formulas (\ref{eqn:classic-c}) and (\ref{eqn:classic-h}).

With this idea in hand, we turned to the noncommutative case. In absence of Grothendieck duality theory, a natural replacement of $f^{!}(\k)$ was Van den Bergh's rigid dualizing complex (see Definition \ref{def:rigid} below). This is a complex $R_A$ of bimodules over $A$, determined uniquely by $A$ (if it exists), which when $A$ is commutative and essentially of finite type over $\k$, coincide with $f^{!}(\k)$.
Unlike the commutative case where the functor $-\otimes^{\mrm{L}}_A -$ takes $\mrm{D}^{\mrm{b}}_{\mrm{f}}(\opn{Mod} A)\times \mrm{D}^{\mrm{b}}_{\mrm{f}}(\opn{Mod} A)$ to $\mrm{D}_{\mrm{f}}(\opn{Mod} A)$, in the noncommutative case, if $M$ is a finitely generated left $A$-module and $N$ is a finitely generated right $A$-module, then $N\otimes^{\mrm{L}}_A M$ is just a complex of abelian groups and not of $A$-modules. However, we may naturally endow $M$ with a right $Z(A)$-module structure, and then $-\otimes^{\mrm{L}}_A-$ becomes a functor
$\mrm{D}^{\mrm{b}}_{\mrm{f}}(\opn{Mod} A^{\opn{op}})\times \mrm{D}^{\mrm{b}}_{\mrm{f}}(\opn{Mod} A) \to \mrm{D}(\opn{Mod} Z(A))$. This suggests the following generalization of (\ref{com-twist-formula}) to the noncommutative case:

\begin{thm}
Let $\k$ be a field, and let $A$ be a $\k$-algebra. Assume that $Z(A)$ is an essentially finite type $\k$-algebra, and that $A$ is finite over $Z(A)$. Let $R_{Z(A)}$ be the rigid dualizing complex of the center $Z(A)$, and let $R_A$ be the rigid dualizing complex of $A$. Then for any $M\in \mrm{D}_{\mrm{f}}^{-}(\opn{Mod} A)$, and any $N\in \mrm{D}_{\mrm{f}}^{-}(\opn{Mod} A^{\opn{op}})$ there is a bifunctorial isomorphism
\[
\mrm{R}\opn{Hom}_{A\otimes_{\k} A^{\opn{op}}}(A,M\otimes_{\k} N) \cong \mrm{R}\opn{Hom}_{Z(A)}(\mrm{R}\opn{Hom}_{A^{\opn{op}}}(N,R^{\opn{op}}_A)\otimes^{\mrm{L}}_{A^{\opn{op}}}\mrm{R}\opn{Hom}_A(M,R_A),R_{Z(A)})
\]
in $\mrm{D}(\opn{Mod} Z(A))$.
\end{thm}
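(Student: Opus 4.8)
\emph{Proof proposal.} The plan is to transport the whole computation to the center $C:=Z(A)$, where it is governed by the rigidity of $R_C$ and by the classical Cartan--Eilenberg reduction \eqref{eqn:classic-h}, and then to translate the answer back to $A$. The facts about rigid dualizing complexes that I will take as input (all recalled in Section \ref{sec:background}, and due to Yekutieli and Zhang) are: since $A$ and $A^{\opn{op}}$ are finite over the central subalgebra $C$, one has $R_A\cong\mrm{R}\opn{Hom}_C(A,R_C)$ as $A$-bimodules and $R^{\opn{op}}_A\cong\mrm{R}\opn{Hom}_C(A^{\opn{op}},R_C)$ as $A^{\opn{op}}$-bimodules; the rigid dualizing complex of $C\otimes_\k C$ is $R_C\otimes_\k R_C$; and $R_C$ is rigid, i.e.\ $\mrm{R}\opn{Hom}_{C\otimes_\k C}(C,R_C\otimes_\k R_C)\cong R_C$ (Definition \ref{def:rigid}). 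Write $C^{\mrm{e}}:=C\otimes_\k C$, let $m\colon C^{\mrm{e}}\to C$ be the multiplication, and set $D_C:=\mrm{R}\opn{Hom}_C(-,R_C)$. Note that $C^{\mrm{e}}$ is essentially of finite type over $\k$, that $A\otimes_\k A^{\opn{op}}$ is finite over the central subalgebra $C^{\mrm{e}}$, and that $M$, $N$ restrict to objects of $\mrm{D}^-_{\mrm{f}}$ over $C$.

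I would run the proof as a chain of functorial isomorphisms. Put $P:=D_C(M)$ and $Q:=D_C(N)$; because $C$ is central, $P$ acquires a right $A$-module structure and $Q$ a left $A$-module structure, so $P\in\mrm{D}^+_{\mrm{f}}(\opn{Mod} A^{\opn{op}})$ and $Q\in\mrm{D}^+_{\mrm{f}}(\opn{Mod} A)$, and biduality over $C$ gives $M\cong D_C(P)$, $N\cong D_C(Q)$, compatibly with the $A$-actions. First, a K\"unneth isomorphism over the field $\k$ (legitimate here since every complex in sight has degreewise finitely generated cohomology over a noetherian ring and $R_C$ has finite injective dimension) identifies, as left $(A\otimes_\k A^{\opn{op}})$-modules,
\[
M\otimes_\k N\;\cong\;D_C(P)\otimes_\k D_C(Q)\;\cong\;\mrm{R}\opn{Hom}_{C^{\mrm{e}}}\bigl(P\otimes_\k Q,\;R_C\otimes_\k R_C\bigr).
\]
Substituting this into the left-hand side and applying Hom--tensor adjunction along the central ring map $C^{\mrm{e}}\to A\otimes_\k A^{\opn{op}}$ (with $A$ a left, and $P\otimes_\k Q$ a right, module over $A\otimes_\k A^{\opn{op}}$) yields
\[
\mrm{R}\opn{Hom}_{A\otimes_\k A^{\opn{op}}}(A,\,M\otimes_\k N)\;\cong\;\mrm{R}\opn{Hom}_{C^{\mrm{e}}}\bigl((P\otimes_\k Q)\otimes^{\mrm{L}}_{A\otimes_\k A^{\opn{op}}}A,\;R_C\otimes_\k R_C\bigr).
\]

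Next, the bar-resolution computation underlying \eqref{eqn:classic-h} gives a natural isomorphism $(P\otimes_\k Q)\otimes^{\mrm{L}}_{A\otimes_\k A^{\opn{op}}}A\cong Q\otimes^{\mrm{L}}_{A^{\opn{op}}}P$, and the $C^{\mrm{e}}$-action on this complex factors through $m$, since $C^{\mrm{e}}$ is central in $A\otimes_\k A^{\opn{op}}$ and acts on $A$ through $m$. Regarding $Q\otimes^{\mrm{L}}_{A^{\opn{op}}}P$ as an object of $\mrm{D}(\opn{Mod} C)$, Hom--tensor adjunction along $m$ together with the rigidity isomorphism $\mrm{R}\opn{Hom}_{C^{\mrm{e}}}(C,R_C\otimes_\k R_C)\cong R_C$ gives
\[
\mrm{R}\opn{Hom}_{C^{\mrm{e}}}\bigl(Q\otimes^{\mrm{L}}_{A^{\opn{op}}}P,\;R_C\otimes_\k R_C\bigr)\;\cong\;\mrm{R}\opn{Hom}_C\bigl(Q\otimes^{\mrm{L}}_{A^{\opn{op}}}P,\;R_C\bigr).
\]
Finally, using $R_A\cong\mrm{R}\opn{Hom}_C(A,R_C)$ and $R^{\opn{op}}_A\cong\mrm{R}\opn{Hom}_C(A^{\opn{op}},R_C)$ together with Hom--tensor adjunction along $C\to A$ and along $C\to A^{\opn{op}}$, one identifies $P=D_C(M)\cong\mrm{R}\opn{Hom}_A(M,R_A)$ as right $A$-modules and $Q=D_C(N)\cong\mrm{R}\opn{Hom}_{A^{\opn{op}}}(N,R^{\opn{op}}_A)$ as left $A$-modules. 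Composing the displayed isomorphisms produces the asserted formula in $\mrm{D}(\opn{Mod} Z(A))$, and bifunctoriality in $M$ and $N$ is automatic because every isomorphism used is natural.

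I expect the main obstacle to be the bookkeeping that noncommutativity forces: one must check that every isomorphism above is one of $(A\otimes_\k A^{\opn{op}})$-modules (and, at the end, of $Z(A)$-modules, with the $Z(A)$-structure on the left-hand side being the one coming from $M$), not merely of complexes of $\k$-modules --- in particular, that the K\"unneth and biduality isomorphisms are $A$-linear, and that the $C^{\mrm{e}}$-action on $(P\otimes_\k Q)\otimes^{\mrm{L}}_{A\otimes_\k A^{\opn{op}}}A$ genuinely descends along $m$. A secondary point needing explicit justification is that biduality over $C$ and the K\"unneth formula are being applied to the possibly unbounded objects $M,N\in\mrm{D}^-_{\mrm{f}}$ and $P,Q\in\mrm{D}^+_{\mrm{f}}$ rather than to bounded complexes; this is harmless because $C$ is noetherian and $R_C$ has finite injective dimension, but it is not entirely formal. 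Everything else is adjunction bookkeeping, granted the quoted structural results on rigid dualizing complexes.
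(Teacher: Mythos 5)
Your argument is correct in substance and takes a genuinely different route from the paper. The paper proceeds in two stages: it first proves Theorem~\ref{thm:hoc-twist}, a ``Hochschild cohomology is the twist of Hochschild homology'' identity valid for an \emph{arbitrary} bimodule coefficient $M$, by dualizing over $A\otimes_\k A^{\opn{op}}$ against $R_A\otimes_\k R^{\opn{op}}_A$, invoking the rigidity of $R_A$, and then applying condition~(3) ($R_A\cong\mrm{R}\opn{Hom}_{Z(A)}(A,R_{Z(A)})$) together with adjunction along $A\otimes_\k A^{\opn{op}}\to Z(A)$. The main theorem is then deduced by specializing to $M\otimes_\k N$ and applying the separation lemmas (Lemmas~\ref{lem:sep} and~\ref{lem:hochsep}). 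You instead bypass Theorem~\ref{thm:hoc-twist} entirely and work at the level of the center from the start: you dualize $M,N$ over $C=Z(A)$, use a K\"unneth identification over $C^{\mrm{e}}$, transport the $\mrm{R}\opn{Hom}_{A\otimes_\k A^{\opn{op}}}(A,-)$ problem to $C^{\mrm{e}}$ by adjunction along the central inclusion $C^{\mrm{e}}\hookrightarrow A\otimes_\k A^{\opn{op}}$, apply the classical Cartan--Eilenberg reduction, invoke rigidity of $R_C$ (rather than of $R_A$), and finally use $R_A\cong\mrm{R}\opn{Hom}_C(A,R_C)$ to rewrite the duals. Both arguments rest on the same three structural inputs --- Künneth for Hom against product dualizing complexes, a rigidity isomorphism, and the relation between $R_A$ and $R_C$ --- but you apply rigidity at $C$ while the paper applies it at $A$. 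What the paper's route buys is Theorem~\ref{thm:hoc-twist} itself, which is an interesting statement independent of tensor-decomposability of the coefficient; your route is shorter and arguably more transparent for the theorem as stated, since it makes explicit that the whole identity is governed by duality over the commutative ring $C^{\mrm{e}}$.

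One step deserves a cleaner formulation. You apply the K\"unneth isomorphism $D_C(P)\otimes_\k D_C(Q)\cong\mrm{R}\opn{Hom}_{C^{\mrm{e}}}(P\otimes_\k Q,R_C\otimes_\k R_C)$ with $P,Q\in\mrm{D}^+_{\mrm{f}}$, which is not what Lemma~\ref{lem:sep} literally covers (it is stated for $\mrm{D}^-_{\mrm{f}}$, where one has bounded-above finite free resolutions to work with). You flag this yourself, and it can be circumvented: apply Lemma~\ref{lem:sep} over $C$ directly to $M,N\in\mrm{D}^-_{\mrm{f}}$ to get $P\otimes_\k Q\cong\mrm{R}\opn{Hom}_{C^{\mrm{e}}}(M\otimes_\k N,R_C\otimes_\k R_C)$, and then use biduality over $C^{\mrm{e}}$ against the dualizing complex $R_C\otimes_\k R_C$ (which applies since $M\otimes_\k N\in\mrm{D}_{\mrm{f}}(\opn{Mod} C^{\mrm{e}})$) to conclude $M\otimes_\k N\cong\mrm{R}\opn{Hom}_{C^{\mrm{e}}}(P\otimes_\k Q,R_C\otimes_\k R_C)$. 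With that adjustment the proof is airtight; the remaining linearity checks you list are indeed just bookkeeping.
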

This is contained in Theorem \ref{thm:main} below. This theorem contains new information even in the case where $A$ is commutative. See Remark \ref{rem:commutative} for details.

\textbf{Acknowledgments.}
The author would like to thank Amnon Yekutieli for some helpful conversations.

\section{Background on dualizing complexes}\label{sec:background}

Let $A$ be a ring which is both left and right noetherian. We denote by $A^{\opn{op}}$ the opposite ring, we let $\opn{Mod} A$ be the category of left $A$-modules, and we denote by $\mrm{D}(\opn{Mod} A)$ its derived category (see \cite{RD} for background on derived categories). Its full subcategory made of bounded complexes of left $A$-modules is denoted by $\mrm{D}^{\mrm{b}}(\opn{Mod} A)$, while its full subcategory made of complexes of left $A$-modules with finitely generated cohomology is denoted by $\mrm{D}_{\mrm{f}}(\opn{Mod} A)$. Over a commutative ring, the notion of a dualizing complex originated in \cite{RD}. In the noncommutative case, dualizing complexes were first defined in \cite{Ye1}. Let us recall the definition.

\begin{dfn}
Let $\k$ be a field, and let $A$ be a $\k$-algebra which is both left and right noetherian. A complex $R \in \mrm{D}^{\mrm{b}}(\opn{Mod} A\otimes_{\k} A^{\opn{op}})$ is called a dualizing complex over $A$ if the following holds:
\begin{enumerate}
\item $R$ has a finite injective dimension over both $A$ and $A^{\opn{op}}$.
\item $R$ has finitely generated cohomology over both $A$ and $A^{\opn{op}}$.
\item The canonical morphisms $A\to \mrm{R}\opn{Hom}_A(R,R)$ and $A\to \mrm{R}\opn{Hom}_{A^{\opn{op}}}(R,R)$ are isomorphisms in $\mrm{D}^{\mrm{b}}(\opn{Mod} A\otimes_{\k} A^{\opn{op}})$.
\end{enumerate}
\end{dfn}

The justification of the name dualizing complex comes from the following fact (\cite[Proposition 4.2]{Ye2}): If $R$ is a dualizing complex over $A$, and if $M\in \mrm{D}_{\mrm{f}}(\opn{Mod} A)$, then the canonical morphism $M \to \mrm{R}\opn{Hom}_{A^{\opn{op}}}(\mrm{R}\opn{Hom}_A(M,R) ,R)$ is an isomorphism in $\mrm{D}(\opn{Mod} A)$.

\begin{lem}
Let $\k$ be a field, let $A$ be a $\k$-algebra which is both left and right noetherian, let $R$ be a dualizing complex over $A$, let $B$ be a commutative $\k$-algebra, and set $D(-):=\mrm{R}\opn{Hom}_A(-,R)$. Let $M\in \mrm{D}^{-}_{(\mrm{f},)}(\opn{Mod} A\otimes_{\k} B)$, and let $N\in \mrm{D}_{\mrm{f}}(\opn{Mod} A)$. Then there is a bifunctorial isomorphism
\[
\mrm{R}\opn{Hom}_A(M,N) \cong \mrm{R}\opn{Hom}_{A^{\opn{op}}}(D(N),D(M))
\]
in $\mrm{D}(\opn{Mod} B)$.
\end{lem}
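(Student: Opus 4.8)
The strategy is to reduce to the known duality isomorphism $N \iso D(D(N))$ for $N \in \mrm{D}_{\mrm{f}}(\opn{Mod} A)$ (the fact quoted after the definition of dualizing complex, due to Yekutieli), combined with standard Hom-tensor adjunctions, taking care to track the auxiliary $B$-module structure carried by $M$. I would first observe that since $B$ is commutative and $M \in \mrm{D}^{-}_{(\mrm{f},)}(\opn{Mod} A\otimes_{\k} B)$, the complex $D(M) = \mrm{R}\opn{Hom}_A(M,R)$ naturally lives in $\mrm{D}(\opn{Mod} A^{\opn{op}}\otimes_{\k} B)$: the left $A$-action on $M$ produces the right $A$-action on $D(M)$, while the $B$-action on $M$ produces a $B$-action on $D(M)$ (here commutativity of $B$ is what lets us keep it on a fixed side). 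Hence both sides of the asserted isomorphism are objects of $\mrm{D}(\opn{Mod} B)$, as claimed.

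The heart of the argument is the chain of isomorphisms
\begin{align*}
\mrm{R}\opn{Hom}_{A^{\opn{op}}}(D(N),D(M))
&= \mrm{R}\opn{Hom}_{A^{\opn{op}}}\!\big(\mrm{R}\opn{Hom}_A(N,R),\,\mrm{R}\opn{Hom}_A(M,R)\big) \\
&\cong \mrm{R}\opn{Hom}_A\!\big(M,\,\mrm{R}\opn{Hom}_{A^{\opn{op}}}(\mrm{R}\opn{Hom}_A(N,R),R)\big) \\
&\cong \mrm{R}\opn{Hom}_A(M,N).
\end{align*}
The second step is the adjunction (derived Hom-swap / "swap" isomorphism) $\mrm{R}\opn{Hom}_{A^{\opn{op}}}(X, \mrm{R}\opn{Hom}_A(M,R)) \cong \mrm{R}\opn{Hom}_A(M, \mrm{R}\opn{Hom}_{A^{\opn{op}}}(X,R))$, valid for $X \in \mrm{D}(\opn{Mod} A^{\opn{op}})$, $M \in \mrm{D}(\opn{Mod} A)$, applied with $X = D(N)$; the third step is the duality isomorphism $\mrm{R}\opn{Hom}_{A^{\opn{op}}}(\mrm{R}\opn{Hom}_A(N,R),R) \cong N$, which is exactly the quoted consequence of the definition, valid because $N \in \mrm{D}_{\mrm{f}}(\opn{Mod} A)$. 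Reading the composite right-to-left gives the statement.

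The main technical point, and the step I expect to require the most care, is not the formal chain above but the bookkeeping of the $B$-module structures: I must check that every isomorphism in the chain is one of complexes of $B$-modules, i.e. $B$-linear and functorial in $M$ over $B$. For the adjunction isomorphism this follows because $B$ acts on $M$ (hence on $D(M)$ and on each $\mrm{R}\opn{Hom}_A(M,-)$) through $\k$-algebra maps commuting with the $A$-actions, so the canonical adjunction morphism is automatically $B$-linear; for the duality isomorphism $N \iso D(D(N))$ there is no $B$ involved and the $B$-structure is simply carried along from $M$. One should also make sure the boundedness/finiteness hypotheses ($M$ bounded above with finitely generated cohomology over $A$, $N$ with finitely generated cohomology, $R$ of finite injective dimension on both sides) legitimize replacing the relevant complexes by bounded-below injective resolutions over $A$ and over $A^{\opn{op}}$ so that the derived functors compute correctly and the adjunction holds at the level of complexes. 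Once these structural checks are in place, the proof is the three-line display above.
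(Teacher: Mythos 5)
Your argument is correct. The derived Hom-swap
\[
\mrm{R}\opn{Hom}_{A^{\opn{op}}}\bigl(X,\mrm{R}\opn{Hom}_A(M,R)\bigr)\cong \mrm{R}\opn{Hom}_A\bigl(M,\mrm{R}\opn{Hom}_{A^{\opn{op}}}(X,R)\bigr)
\]
does hold (both sides compute $\mrm{R}\opn{Hom}_{A\otimes_\k A^{\opn{op}}}(M\otimes_\k X,R)$ after resolving $M$ and $X$ by K-projectives over $A$, $A^{\opn{op}}$ respectively and $R$ by a K-injective over $A\otimes_\k A^{\opn{op}}$), and taking $X=D(N)$ together with the biduality $D^{\opn{op}}(D(N))\cong N$ for $N\in\mrm{D}_{\mrm{f}}(\opn{Mod} A)$ gives exactly the claimed isomorphism; your discussion of the $B$-linearity (the $B$-action enters only through $M$, the adjunction is natural in $M$, and $B=B^{\opn{op}}$) is the right thing to check. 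The one point worth noting is that the paper does not give this argument at all: it simply cites Yekutieli's \cite[Proposition 4.2(2)]{Ye2}, adding only the remark that (a) $B=B^{\opn{op}}$ lets one ignore the side of the $B$-action and (b) the hypothesis there that $N$ carry a $B$-structure can be dropped. So while your reasoning is essentially the argument behind that cited result, you are reproving it from scratch rather than invoking it; that is harmless, but it also means you have silently taken on the burden of the resolution-bookkeeping that the citation was meant to dispose of. In particular, the hypotheses $M\in\mrm{D}^-_{(\mrm{f},)}$ that you flag as needing attention are indeed there to guarantee one can choose a single resolution of $M$ (bounded above, K-projective over $A\otimes_\k B$, hence K-projective over $A$ since $A\to A\otimes_\k B$ is flat) that carries the $B$-structure through all the Homs — a detail you gesture at but do not fully discharge, and which the citation absorbs.
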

\begin{proof}
This is just \cite[Proposition 4.2(2)]{Ye2}, using the fact that $B=B^{\opn{op}}$. There, it is also assumed that $N$ has a $B$-structure, but this is actually not needed.
\end{proof}

\begin{prop}\label{prop:hom-dual}
Let $\k$ be a field, let $A$ be a $\k$-algebra which is both left and right noetherian, let $R$ be a dualizing complex over $A$, let $B$ be a commutative $\k$-algebra, and set $D(-):=\mrm{R}\opn{Hom}_A(-,R)$. Let $M\in \mrm{D}^{\mrm{b}}_{\mrm{f}}(\opn{Mod} A)$, and let $N\in \mrm{D}_{(\mrm{f},)}(\opn{Mod} A\otimes_{\k} B)$. Then there is a bifunctorial isomorphism
\[
\mrm{R}\opn{Hom}_A(M,N) \cong \mrm{R}\opn{Hom}_{A^{\opn{op}}}(D(N),D(M))
\]
in $\mrm{D}(\opn{Mod} B)$.
\end{prop}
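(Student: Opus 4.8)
The plan is to reduce the assertion to the reflexivity of the dualizing complex $R$ recalled after its definition. Set $D^{\opn{op}}(-) := \mrm{R}\opn{Hom}_{A^{\opn{op}}}(-,R)$, so that the right-hand side of the claimed isomorphism reads $\mrm{R}\opn{Hom}_{A^{\opn{op}}}\bigl(D(N),\, \mrm{R}\opn{Hom}_A(M,R)\bigr)$. The heart of the matter is to transpose the two nested $\mrm{R}\opn{Hom}$'s, that is, to produce a bifunctorial isomorphism in $\mrm{D}(\opn{Mod} B)$
\[
\mrm{R}\opn{Hom}_{A^{\opn{op}}}\bigl(D(N),\, \mrm{R}\opn{Hom}_A(M,R)\bigr) \;\cong\; \mrm{R}\opn{Hom}_A\bigl(M,\, \mrm{R}\opn{Hom}_{A^{\opn{op}}}(D(N),R)\bigr) \;=\; \mrm{R}\opn{Hom}_A\bigl(M,\, D^{\opn{op}}(D(N))\bigr),
\]
and then to observe that $D^{\opn{op}}(D(N)) \cong N$ in $\mrm{D}(\opn{Mod} A \otimes_{\k} B)$: this is precisely the reflexivity statement applied to $N \in \mrm{D}_{\mrm f}(\opn{Mod} A)$, the $B$-structure playing no role in that statement but being transported by functoriality of the canonical morphism. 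Feeding this into the last term yields $\mrm{R}\opn{Hom}_A(M,N)$.

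For the transposition I would go through the enveloping algebra $A^{\mrm e} := A \otimes_{\k} A^{\opn{op}}$, rewriting both sides as $\mrm{R}\opn{Hom}_{A^{\mrm e}}\bigl(M \otimes_{\k} D(N),\, R\bigr)$, where $M \otimes_{\k} D(N)$ carries the evident $(A^{\mrm e} \otimes_{\k} B)$-module structure. This is where the hypothesis that $\k$ is a field enters. Since $A^{\mrm e}$ is free as a module over $A$ and over $A^{\opn{op}}$, restriction along $A \to A^{\mrm e}$ (and along $A^{\opn{op}} \to A^{\mrm e}$) preserves K-injectivity, its left adjoint $A^{\mrm e} \otimes_A -$ being exact; so a K-injective resolution $R \to I$ over $A^{\mrm e}$ is also K-injective over $A$ and over $A^{\opn{op}}$. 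Moreover, since $\otimes_{\k}$ is exact, one checks that $\opn{Hom}_A(M,I)$ is K-injective over $A^{\opn{op}}$, that $\opn{Hom}_A(N,I)$ represents $D(N)$, and that $\opn{Hom}_{A^{\opn{op}}}\bigl(\opn{Hom}_A(N,I),\, I\bigr)$ is K-injective over $A$. Hence every derived Hom above is computed by the actual complexes $I$ and $\opn{Hom}_A(N,I)$, and the displayed isomorphism reduces to the ordinary Hom--tensor adjunction between honest complexes; in that form it is visibly functorial in $M$ and $N$ and compatible with the $B$-action.

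The only point that I expect to need genuine care is this second step: verifying that the Hom--tensor adjunction over $A^{\mrm e}$, $A$ and $A^{\opn{op}}$ is legitimate for the (possibly unbounded) complexes in play and simultaneously respects the $A$-, $A^{\opn{op}}$- and $B$-module structures -- in particular, that the $B$-structure on $\mrm{R}\opn{Hom}_{A^{\opn{op}}}(D(N),D(M))$, which comes from $N$ ``twisted'' twice by $\mrm{R}\opn{Hom}(-,R)$, agrees, because $B$ is commutative, with the standard $B$-structure on $\mrm{R}\opn{Hom}_A(M,N)$ coming from $N$. Everything else is either a definition or the cited reflexivity of $R$. (Alternatively, one could first replace $M$ by $D^{\opn{op}}(D(M))$ using reflexivity of $M \in \mrm{D}^{\mrm b}_{\mrm f}(\opn{Mod} A)$ and then apply the preceding Lemma over $A^{\opn{op}}$, with $R$ regarded as a dualizing complex there; the argument above is essentially this route with the underlying adjunction made explicit.)
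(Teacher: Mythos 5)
Your proof is correct, and it takes a genuinely different route from the paper's. The paper applies the preceding Lemma (which is the same kind of statement, but with the boundedness hypotheses swapped between the two arguments) to the triple $(A^{\opn{op}}, D(N), D(M))$, producing $\mrm{R}\opn{Hom}_A(D^{\opn{op}}D(M), D^{\opn{op}}D(N))$, and then invokes biduality \emph{twice}, once for $M$ and once for $N$. Your main route instead goes directly through the Hom--tensor adjunction over $A^{\mrm e} = A\otimes_{\k}A^{\opn{op}}$, obtaining $\mrm{R}\opn{Hom}_A(M, D^{\opn{op}}D(N))$ in one step and then using biduality only for $N$. That is, you are effectively reproving (the relevant instance of) Yekutieli's Proposition 4.2(2) from scratch rather than quoting it. The alternative you sketch at the end is, up to reordering, exactly what the paper does. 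Your unrolled version is more self-contained; the paper's version is shorter but leans on two citations.

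One remark on the step you flag as needing ``genuine care'': your K-injectivity claims are in fact clean. Since $\k$ is a field, the functors $M\otimes_{\k} - \colon \opn{K}(\opn{Mod} A^{\opn{op}}) \to \opn{K}(\opn{Mod} A^{\mrm e})$ and $-\otimes_{\k} X \colon \opn{K}(\opn{Mod} A) \to \opn{K}(\opn{Mod} A^{\mrm e})$ are exact, and $\opn{Hom}_A(M,-)$, $\opn{Hom}_{A^{\opn{op}}}(X,-)$ are their right adjoints; a right adjoint to an exact functor between homotopy categories preserves K-injectives. So $\opn{Hom}_A(M,I)$ is K-injective over $A^{\opn{op}}$ and $\opn{Hom}_{A^{\opn{op}}}(\opn{Hom}_A(N,I),I)$ is K-injective over $A$, with no restriction on $M$ or $N$. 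With those two facts in hand, both derived Homs are computed by honest Hom complexes and the display reduces to the plain Hom--tensor adjunction over $A^{\mrm e}$, which is manifestly $B$-linear. So there is no gap, just a point you had correctly identified as the place to be careful.
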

\begin{proof}
Set $D^{\opn{op}}(-):=\mrm{R}\opn{Hom}_{A^{\opn{op}}}(-,R)$.
We have that $D(M) \in \mrm{D}^{\mrm{b}}_{\mrm{f}}(\opn{Mod} A^{\opn{op}})$. By \cite[Proposition 4.2(1)]{Ye2}, we have that 
$D(N)\in \mrm{D}_{(,\mrm{f})}(\opn{Mod} B\otimes_{\k} A^{\opn{op}})$. Hence, we may apply the previous lemma for $(A^{\opn{op}},D(N),D(M))$, and obtain that there is a $B$-linear isomorphism
\[
\mrm{R}\opn{Hom}_{A^{\opn{op}}}(D(N),D(M)) \cong \mrm{R}\opn{Hom}_A(D^{\opn{op}}(D(M)),D^{\opn{op}}(D(N))).
\]
Again, by \cite[Proposition 4.2(1)]{Ye2}, there is an $A$-linear isomorphism $M\cong D^{\opn{op}}(D(M))$, and an $A\otimes_{\k} B$-linear isomorphism $N\cong D^{\opn{op}}(D(N))$. Hence, there is a $B$-linear isomorphism $\mrm{R}\opn{Hom}_A(M,N) \cong \mrm{R}\opn{Hom}_{A^{\opn{op}}}(D(N),D(M))$.
\end{proof}

\begin{rem}
The condition that $N\in \mrm{D}_{(\mrm{f},)}(\opn{Mod} A\otimes_{\k} B)$ means that $N$ is a complex of left $A$-modules with finitely generated cohomology, such that $N$ also has the structure of a complex of $B$-modules, and moreover, 
$a \cdot (b\cdot n) = b\cdot (a\cdot n)$ for all $a\in A, b\in B$.
\end{rem}

Recall that if $A$ is a $\k$-algebra, and $M$ is a complex of $A$-bimodules, then $M^{\opn{op}}$ is the complex which is equal to $M$ as a complex over $\k$, and whose $A$-bimodule structure is given by
\[
a \cdot r \cdot b:= bra.
\]

The next definition originated in \cite{VdB}.

\begin{dfn}\label{def:rigid}
Let $\k$ be a field, let $A$ be a $\k$-algebra, and let $R_A$ be a dualizing complex over $A$. Then $R_A$ is a rigid dualizing complex over $A$ if there is an isomorphism
\[
\rho : R_A\to \mrm{R}\opn{Hom}_{A\otimes_{\k} A^{\opn{op}}}(A,R_A\otimes_{\k} R^{\opn{op}}_A)
\]
in $\mrm{D}(\opn{Mod} A\otimes_{\k} A^{\opn{op}})$.
\end{dfn}

\begin{rem}
This definition is ambiguous, let us resolve the ambiguity. The complex $R_A\otimes_{\k} R^{\opn{op}}_A$ is a complex of $A\otimes_{\k} A^{\opn{op}}$-bimodules. The left $A\otimes_{\k} A^{\opn{op}}$-structure is given by the outside structure: $(a\otimes b)\cdot (r\otimes s) = ar\otimes sb$. The right $A\otimes_{\k} A^{\opn{op}}$-structure is given by the inside structure: $(r\otimes s)\cdot (a\otimes b) = ra\otimes bs$.
In the right hand side of the isomorphism $\rho$, we treat $A$ as a left $A\otimes_{\k} A^{\opn{op}}$-module, so we compute
\[
\mrm{R}\opn{Hom}_{A\otimes_{\k} A^{\opn{op}}}({ }_{A\otimes_{\k} A^{\opn{op}}}A,{ }_{A\otimes_{\k} A^{\opn{op}}} (R_A)\otimes_{\k} {(R^{\opn{op}}_A)}_{A\otimes_{\k} A^{\opn{op}}})
\]
Now, a priori, this gives a complex of right $A\otimes_{\k} A^{\opn{op}}$-modules. However, because $A\otimes_{\k} A^{\opn{op}}$ has an involution, as explained in the first paragraph of \cite[Section 5]{Ye2}, such a structure give rise to a left $A\otimes_{\k} A^{\opn{op}}$-structure on this complex in a canonical way.
\end{rem}

\begin{prop}
Let $\k$ be a field, let $A$ be a $\k$-algebra, and let $R_A$ be a rigid dualizing complex over $A$. Then $R^{\opn{op}}_A$ is a rigid dualizing complex over $A^{\opn{op}}$.
\end{prop}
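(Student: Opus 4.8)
The plan is to verify that $R^{\opn{op}}_A$ satisfies the three conditions of being a dualizing complex over $A^{\opn{op}}$, and then to produce the rigidity isomorphism. Conditions (1)--(3) of the definition of dualizing complex are manifestly symmetric in $A$ and $A^{\opn{op}}$: passing from $R_A$ to $R^{\opn{op}}_A$ interchanges the roles of ``left'' and ``right'' throughout, and $(A^{\opn{op}})^{\opn{op}} = A$, so finite injective dimension over $A$ and $A^{\opn{op}}$, finite generation of cohomology over both, and the homothety isomorphisms $A \to \mrm{R}\opn{Hom}_A(R_A,R_A)$ and $A \to \mrm{R}\opn{Hom}_{A^{\opn{op}}}(R_A,R_A)$ translate directly into the corresponding statements for $R^{\opn{op}}_A$ over $A^{\opn{op}}$. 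I would state this in one sentence.

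The substance is the rigidity isomorphism. I need an isomorphism
\[
\rho^{\opn{op}}: R^{\opn{op}}_A \to \mrm{R}\opn{Hom}_{A^{\opn{op}}\otimes_{\k} (A^{\opn{op}})^{\opn{op}}}(A^{\opn{op}}, R^{\opn{op}}_A \otimes_{\k} (R^{\opn{op}}_A)^{\opn{op}})
\]
in $\mrm{D}(\opn{Mod} A^{\opn{op}}\otimes_{\k} (A^{\opn{op}})^{\opn{op}})$. The idea is to apply the $(-)^{\opn{op}}$ operation to the given isomorphism $\rho$ for $A$. Note $(A^{\opn{op}})^{\opn{op}} = A$, so $A^{\opn{op}}\otimes_{\k}(A^{\opn{op}})^{\opn{op}} = A^{\opn{op}}\otimes_{\k} A$; using the flip $A\otimes_{\k} A^{\opn{op}} \cong A^{\opn{op}}\otimes_{\k} A$ (which is exactly the involution on the enveloping algebra referenced in the excerpt via \cite[Section 5]{Ye2}), a complex of $A\otimes_{\k} A^{\opn{op}}$-modules is the same thing as a complex of $A^{\opn{op}}\otimes_{\k} A$-modules, and the operation $M\mapsto M^{\opn{op}}$ on bimodules is compatible with this. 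I would first observe that $(A)^{\opn{op}} = A^{\opn{op}}$ as a bimodule over the enveloping algebra, that $(R_A\otimes_{\k} R^{\opn{op}}_A)^{\opn{op}} \cong R^{\opn{op}}_A \otimes_{\k} (R^{\opn{op}}_A)^{\opn{op}}$ (since $(-)^{\opn{op}}$ swaps the two tensor factors and is an involution, $(R^{\opn{op}}_A)^{\opn{op}} = R_A$), and that $(-)^{\opn{op}}$ commutes with $\mrm{R}\opn{Hom}$ over the enveloping algebra in the sense that $\mrm{R}\opn{Hom}_{A\otimes_{\k}A^{\opn{op}}}(X,Y)^{\opn{op}} \cong \mrm{R}\opn{Hom}_{(A\otimes_{\k}A^{\opn{op}})^{\opn{op}}}(X^{\opn{op}},Y^{\opn{op}})$, and then use the involution to rewrite the right-hand side as $\mrm{R}\opn{Hom}_{A^{\opn{op}}\otimes_{\k}A}(-,-)$. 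Applying $(-)^{\opn{op}}$ to $\rho$ and chaining these identifications yields $\rho^{\opn{op}}$.

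The main obstacle I anticipate is purely bookkeeping: keeping track of which of the (at least four) module structures on $R_A\otimes_{\k} R^{\opn{op}}_A$ is used where, and checking that the $(-)^{\opn{op}}$ functor, the enveloping-algebra involution, and $\mrm{R}\opn{Hom}$ interact as claimed at the level of the derived category and not merely up to some untracked twist. Concretely, I would need to check that the canonical ``homothety''-type morphism defining rigidity is sent by $(-)^{\opn{op}}$ to the corresponding canonical morphism for $A^{\opn{op}}$ — i.e.\ that the construction is natural, not just that source and target agree abstractly — so that the isomorphism $\rho$ transports to an isomorphism $\rho^{\opn{op}}$ of the required canonical morphism. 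Since $(-)^{\opn{op}}$ is an additive involutive equivalence from $\opn{Mod}(A\otimes_{\k}A^{\opn{op}})$ to itself (composed with the flip), it is exact and commutes with everything in sight, so this should go through; I would record the identifications as a short chain of isomorphisms and leave the (routine) verification that they are the canonical ones to the reader, citing the first paragraph of \cite[Section 5]{Ye2} for the handling of the involution.
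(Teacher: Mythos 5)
The paper gives no argument here: the proof of this proposition is a one‑line citation to \cite[Theorem 8.9]{YZ1}, so there is nothing in the paper itself to compare your argument against step by step. Your strategy --- observe that the three conditions defining a dualizing complex are symmetric under $(-)^{\opn{op}}$, then transport the rigidity isomorphism $\rho$ through the equivalence $(-)^{\opn{op}}$ from $A$-bimodules to $A^{\opn{op}}$-bimodules, using the enveloping-algebra involution and the compatibilities $(X\otimes_{\k}Y)^{\opn{op}}\cong Y^{\opn{op}}\otimes_{\k}X^{\opn{op}}$ and $\mrm{R}\opn{Hom}_{A\otimes_{\k}A^{\opn{op}}}(X,Y)^{\opn{op}}\cong\mrm{R}\opn{Hom}_{A^{\opn{op}}\otimes_{\k}A}(X^{\opn{op}},Y^{\opn{op}})$ --- is the natural direct proof and is, to the best of my knowledge, how the cited result is established. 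You are honest that what you have written is a plan rather than a finished verification; the genuine content is exactly the bookkeeping you flag (in particular, confirming that the swap identification of $(R_A\otimes_{\k}R_A^{\opn{op}})^{\opn{op}}$ respects both the outside and the inside $A\otimes_{\k}A^{\opn{op}}$-actions, not just the outside one, and that the composite of your identifications really carries the canonical rigidity morphism for $A$ to the canonical one for $A^{\opn{op}}$). If you intend this as a self-contained replacement for the citation, those two points should be written out; if you are content to match the paper, simply citing \cite[Theorem 8.9]{YZ1} suffices.
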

\begin{proof}
This is shown in the proof of \cite[Theorem 8.9]{YZ1}.
\end{proof}

\begin{prop}\label{prop:dual-on-ae}
Let $\k$ be a field, let $C$ be an essentially finite type $\k$-algebra, and let $A$ be a finite $C$-algebra. Then the complex $R_A\otimes_{\k} R^{\opn{op}}_A$ is a rigid dualizing complex over the $\k$-algebra $A\otimes_{\k} A^{\opn{op}}$. The left $A\otimes_{\k} A^{\opn{op}}$-structure is given by the outside structure: $(a\otimes b)\cdot (r\otimes s) = ar\otimes sb$. The right $A\otimes_{\k} A^{\opn{op}}$-structure is given by the inside structure: $(r\otimes s)\cdot (a\otimes b) = ra\otimes bs$.
\end{prop}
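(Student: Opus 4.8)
The idea is to recognize $R_A\otimes_{\k}R^{\opn{op}}_A$ as the rigid dualizing complex of $A\otimes_{\k}A^{\opn{op}}$, using uniqueness of rigid dualizing complexes together with their behaviour along finite homomorphisms, so that the verification is reduced to statements over the commutative base.

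First I would check that $A\otimes_{\k}A^{\opn{op}}$ is within the scope of the theory. Since $C$ is central in $A$ it is also central in $A^{\opn{op}}$, and $A^{\opn{op}}$ is finite over $C$; hence $A\otimes_{\k}A^{\opn{op}}$ is finite over $C\otimes_{\k}C$, which in turn is an essentially finite type $\k$-algebra (a localization of a finitely generated $\k$-algebra, since a tensor product of two such is again such). In particular both rings are noetherian, $C\otimes_{\k}C$ admits a rigid dualizing complex, and the theory of rigid dualizing complexes applies to $A\otimes_{\k}A^{\opn{op}}$.

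Next, by the Yekutieli--Zhang theory, for a finite homomorphism $S\to T$ with $S$ admitting a rigid dualizing complex $R_S$ relative to $\k$, the complex $\mrm{R}\opn{Hom}_S(T,R_S)$, with its natural bimodule structure, is a rigid dualizing complex for $T$ relative to $\k$. Applying this to $C\to A$ gives $R_A\cong\mrm{R}\opn{Hom}_C(A,R_C)$, where $R_C=f_C^{!}(\k)$ is the rigid dualizing complex of the commutative essentially finite type algebra $C$; applying it to $C\to A^{\opn{op}}$ and invoking the previous proposition (so that $R^{\opn{op}}_A$, being a rigid dualizing complex over $A^{\opn{op}}$, equals $R_{A^{\opn{op}}}$ by uniqueness) gives $R^{\opn{op}}_A\cong\mrm{R}\opn{Hom}_C(A^{\opn{op}},R_C)$; and applying it to $C\otimes_{\k}C\to A\otimes_{\k}A^{\opn{op}}$, together with the fact that the rigid dualizing complex of $C\otimes_{\k}C$ is $R_C\otimes_{\k}R_C$ (the tensor product over the field $\k$ of dualizing complexes is dualizing, it satisfies the rigidity identity since that identity factors through the identities for the two factors, and uniqueness does the rest), shows that $A\otimes_{\k}A^{\opn{op}}$ has rigid dualizing complex $\mrm{R}\opn{Hom}_{C\otimes_{\k}C}(A\otimes_{\k}A^{\opn{op}},R_C\otimes_{\k}R_C)$. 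Finally, the Künneth isomorphism
\[
\mrm{R}\opn{Hom}_C(A,R_C)\otimes_{\k}\mrm{R}\opn{Hom}_C(A^{\opn{op}},R_C)\;\cong\;\mrm{R}\opn{Hom}_{C\otimes_{\k}C}(A\otimes_{\k}A^{\opn{op}},R_C\otimes_{\k}R_C),
\]
valid because $\k$ is a field (compute with finitely generated free resolutions of $A$ and $A^{\opn{op}}$ over $C$ and of $A\otimes_{\k}A^{\opn{op}}$ over $C\otimes_{\k}C$), identifies $R_A\otimes_{\k}R^{\opn{op}}_A$ with this rigid dualizing complex; and tracing through the bimodule structures shows that the left (resp.\ right) $A\otimes_{\k}A^{\opn{op}}$-action is the outside (resp.\ inside) one, as asserted.

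I expect the real work to be bookkeeping rather than ideas: one must check that every isomorphism in the chain respects the $A\otimes_{\k}A^{\opn{op}}$-\emph{bimodule} structures, not merely the left module structures, and that the rigidity isomorphism transported onto $R_A\otimes_{\k}R^{\opn{op}}_A$ is the ``product'' of the rigidity isomorphisms of $R_A$ over $A$ and of $R^{\opn{op}}_A$ over $A^{\opn{op}}$; this forces one to keep careful track of the opposite-algebra conventions, of the permutation of tensor factors inside $(A\otimes_{\k}A^{\opn{op}})\otimes_{\k}(A\otimes_{\k}A^{\opn{op}})^{\opn{op}}$, and of the involution on enveloping algebras used to convert right modules to left modules. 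A more self-contained alternative bypasses the finite-algebra formula and verifies the dualizing axioms for $R_A\otimes_{\k}R^{\opn{op}}_A$ over $A\otimes_{\k}A^{\opn{op}}$ directly --- finitely generated cohomology and finite injective dimension over the algebra and its opposite, plus the rigidity equation --- where the only delicate point is finite injective dimension over the enveloping algebra, since a tensor product over $\k$ of injective modules need not be injective over $A\otimes_{\k}A^{\opn{op}}$, so one instead detects injective dimension on cyclic modules and reduces to a Künneth computation.
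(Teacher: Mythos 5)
The paper's proof of this proposition is a one-line citation of \cite[Theorem 8.5]{YZ1}, a general result stating that if two $\k$-algebras $A$, $B$ (under suitable noetherianity and finiteness hypotheses) admit rigid dualizing complexes $R_A$, $R_B$, then $R_A\otimes_{\k}R_B$ is a rigid dualizing complex over $A\otimes_{\k}B$; the proposition is the case $B=A^{\opn{op}}$, $R_B=R_A^{\opn{op}}$. Your argument takes a genuinely different route: instead of invoking this noncommutative product theorem, you reduce everything to the commutative base via Proposition~\ref{prop:finite-has-rigid} (the finite-morphism formula $R_T\cong\mrm{R}\opn{Hom}_S(T,R_S)$), observing that $C\to A$, $C\to A^{\opn{op}}$, and $C\otimes_{\k}C\to A\otimes_{\k}A^{\opn{op}}$ are all finite with commutative essentially finite type source, then glue the three applications together with a Künneth isomorphism over the field $\k$. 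This is a correct and arguably more elementary strategy, since the noncommutative content is outsourced to the already-cited finite-morphism formula, and the only genuinely bilinear input is the identification $R_{C\otimes_{\k}C}\cong R_C\otimes_{\k}R_C$, which lives entirely in commutative Grothendieck duality (compatibility of $f^{!}$ with exterior products over a field). The trade-off is that the commutative product identity $R_{C\otimes_{\k}C}\cong R_C\otimes_{\k}R_C$ — which you dispatch rather quickly — is itself the commutative special case of \cite[Theorem 8.5]{YZ1}, so you have not fully bypassed the theorem so much as localized its use to the commutative setting where it can be proved by softer means. Your parenthetical justification of the rigidity of $R_C\otimes_{\k}R_C$ (``the identity factors through the identities for the two factors'') is the place that would need to be written out carefully: one has to identify $(C\otimes_{\k}C)\otimes_{\k}(C\otimes_{\k}C)^{\opn{op}}$ with $C^{\mrm e}\otimes_{\k}C^{\mrm e}$ up to a permutation of factors, apply a Künneth decomposition of $\mrm{R}\opn{Hom}$, and check the resulting isomorphism is the tensor square of the rigidity isomorphism for $R_C$. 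Likewise the final bookkeeping of outside/inside bimodule structures through the Künneth isomorphism, which you correctly flag as the delicate part, does need to be done. With those two points filled in, the argument is sound.
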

\begin{proof}
This is a particular case of \cite[Theorem 8.5]{YZ1}.
\end{proof}

\begin{prop}\label{prop:finite-has-rigid}
Let $\k$ be a field, let $C$ be an essentially finite type $\k$-algebra, and let $A$ be a finite $C$-algebra. Then $C$ has a rigid dualizing complex $R_C$, and $A$ has a rigid dualizing complex $R_A$, which is given by $R_A := \mrm{R}\opn{Hom}_C(A,R_C)$.
\end{prop}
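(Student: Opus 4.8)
The plan is to reduce the statement to the already-cited results of Yekutieli–Zhang. The existence of a rigid dualizing complex $R_C$ over the essentially finite type commutative $\k$-algebra $C$ is standard (it is, up to shift and twist, the complex $f^{!}(\k)$ from Grothendieck duality, or one may cite \cite{YZ1} directly); so the real content is the assertion about $A$. First I would verify that $R_A := \mrm{R}\opn{Hom}_C(A,R_C)$ is a dualizing complex over $A$. Here $C$ acts centrally, so $\mrm{R}\opn{Hom}_C(A,R_C)$ inherits an $A$-bimodule structure from the two commuting $A$-actions on $A$, and lands in $\mrm{D}^{\mrm{b}}(\opn{Mod} A\otimes_{\k} A^{\opn{op}})$ because $A$ is finite over $C$. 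The three axioms of a dualizing complex should be checked by standard adjunction arguments over the central subring $C$: finiteness of cohomology over $A$ and over $A^{\opn{op}}$ follows since $A$ is module-finite over $C$ and $R_C$ has finite cohomology; finite injective dimension over $A$ and $A^{\opn{op}}$ follows from finite injective dimension of $R_C$ over $C$ together with the fact that $\mrm{R}\opn{Hom}_C(A,-)$ preserves appropriate boundedness because $A$ is $C$-finite and $C$ is noetherian; and the bidualizing identity $A\iso\mrm{R}\opn{Hom}_A(R_A,R_A)$ reduces, via Hom–tensor adjunction over the central $C$, to the identity $A\iso\mrm{R}\opn{Hom}_C(A,\mrm{R}\opn{Hom}_C(A,R_C))$... but in fact the cleanest route is simply to invoke \cite[Theorem 8.5 and Corollary]{YZ1} (or the relevant construction of $f^{\flat}$ in that paper), where this is proved in exactly this generality.

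Next I would address rigidity. By Definition \ref{def:rigid}, I must produce an isomorphism $R_A\iso\mrm{R}\opn{Hom}_{A\otimes_{\k}A^{\opn{op}}}(A,R_A\otimes_{\k}R^{\opn{op}}_A)$ in $\mrm{D}(\opn{Mod} A\otimes_{\k}A^{\opn{op}})$. By Proposition \ref{prop:dual-on-ae}, the complex $R_A\otimes_{\k}R^{\opn{op}}_A$ is the rigid dualizing complex of $A^{\mrm{e}}:=A\otimes_{\k}A^{\opn{op}}$, with the outside/inside structures as stated; and $A$ is a finite $A^{\mrm{e}}$-algebra... more precisely, $A$ is a finitely generated module over $A^{\mrm{e}}$ is false in general, but $A$ is module-finite over $Z(A)$ which is module-finite over $C$, and $A^{\mrm{e}}$ is finite over $C\otimes_{\k}C^{\opn{op}}$. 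The right framework is Yekutieli–Zhang's theory of rigid dualizing complexes relative to a base and the fact that for a finite algebra map $B\to A$ the complex $\mrm{R}\opn{Hom}_B(A,R_B)$ is again rigid when $R_B$ is; I would apply this with $B=A^{\mrm{e}}$ acting on $A$ through the multiplication map $A^{\mrm{e}}\to A$. Thus $\mrm{R}\opn{Hom}_{A^{\mrm{e}}}(A,R_A\otimes_{\k}R^{\opn{op}}_A)$ is a rigid dualizing complex over $A$, and since rigid dualizing complexes are unique (\cite{VdB}, and \cite[Theorem]{YZ1}), it must be isomorphic to $R_A$, giving the required $\rho$. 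In fact this is precisely the content of \cite[Theorem 8.5]{YZ1}, so I would present the whole proposition as an instance of that theorem.

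The single point requiring care — and the step I expect to be the main obstacle — is the compatibility of the various $A$-bimodule structures: one must check that the $A\otimes_{\k}A^{\opn{op}}$-module structure on $\mrm{R}\opn{Hom}_{A^{\mrm{e}}}(A,R_A\otimes_{\k}R^{\opn{op}}_A)$ coming from the involution on $A^{\mrm{e}}$ (as recalled in the Remark following Definition \ref{def:rigid}) matches the structure on $R_A=\mrm{R}\opn{Hom}_C(A,R_C)$ under the isomorphism, rather than its opposite. This is exactly the kind of bookkeeping that \cite{YZ1} handles, so in the write-up I would either reproduce their normalization or simply cite \cite[Theorem 8.5]{YZ1} for the existence and \cite[Corollary 8.2 / uniqueness]{YZ1} and \cite{VdB} for uniqueness, and then note that the displayed formula $R_A=\mrm{R}\opn{Hom}_C(A,R_C)$ is the explicit description given there. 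Since all the heavy lifting is external, the proof in the paper will be short: \emph{"This is a special case of \cite[Theorem 8.5]{YZ1}"}, possibly with a sentence indicating why the hypotheses of that theorem are met (namely $C$ is essentially of finite type over the field $\k$, hence has a rigid dualizing complex, and $A\to$ nothing, $A$ is finite over $C$).
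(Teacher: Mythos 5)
Your plan is essentially ``reduce to Yekutieli--Zhang,'' which is also what the paper does, but the specific route you take has two genuine problems and the citation you land on is the wrong one.

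First, the circularity: to manufacture the rigidity isomorphism you invoke Proposition~\ref{prop:dual-on-ae} to get that $R_A\otimes_{\k}R_A^{\opn{op}}$ is a rigid dualizing complex over $A^{\mrm{e}}=A\otimes_{\k}A^{\opn{op}}$, and then appeal to uniqueness. But Proposition~\ref{prop:dual-on-ae} (and the result it cites, \cite[Theorem~8.5]{YZ1}) already \emph{presupposes} that $R_A$ is the rigid dualizing complex of $A$ --- that is precisely the hypothesis needed for the tensor product to be rigid over the tensor algebra. You cannot use it to establish the rigidity of $R_A$ itself. Second, the step where you want to treat $A$ as ``a finite $A^{\mrm{e}}$-algebra via the multiplication map $A^{\mrm{e}}\to A$'' and then apply a finite-ring-map trace formula does not typecheck: the multiplication map $A\otimes_{\k}A^{\opn{op}}\to A$ is \emph{not} a ring homomorphism when $A$ is noncommutative, as this very paper stresses at the start of Section~2 (``For noncommutative algebras, the map $A\otimes_{\k}A^{\opn{op}}\to A$ is not a ring homomorphism''). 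So the finite-morphism rigidity transfer simply does not apply with that $B$.

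Finally, your concluding citation is off: \cite[Theorem~8.5]{YZ1} is the tensor-product theorem used for Proposition~\ref{prop:dual-on-ae}, not the finite-algebra-map formula. The paper's own proof is a direct citation: existence of the rigid dualizing complex $R_A$ for $A$ module-finite over an essentially finite type commutative base is \cite[Theorem~3.6]{YZ2} (see also \cite[Theorem~8.5.6]{AIL}), and the explicit description $R_A\cong\mrm{R}\opn{Hom}_C(A,R_C)$ is \cite[Proposition~5.9]{Ye2}. The only wrinkle the paper flags --- and which your write-up does not address --- is that \cite[Proposition~5.9]{Ye2} is stated for $C$ of finite type over $\k$, and one must observe that the argument there goes through verbatim for $C$ essentially of finite type once the existence of $R_C$ is known.
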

\begin{proof}
The fact that $A$ has a rigid dualizing complex is proved in \cite[Theorem 3.6]{YZ2} (see also \cite[Theorem 8.5.6]{AIL}). The second claim is shown in \cite[Proposition 5.9]{Ye2}. There, $C$ is assumed to be of finite type over $\k$, but once we know that $C$ has a rigid dualizing complex, the proof there generalizes to this case.
\end{proof}

\section{Reduction of Hochschild cohomology}

If $\k$ is a field, and $A$ is a $\k$-algebra, given a complex $M$ of $A$-bimodules, its Hochschild homology and cohomology complexes are given by $A\otimes^{\mrm{L}}_{A\otimes_{\k} A^{\opn{op}}} M$ and $\mrm{R}\opn{Hom}_{A\otimes_{\k} A^{\opn{op}}}(A,M)$ respectively. Following the conventions of \cite[Section 1.1.5]{Lo}, we give these complexes a left $Z(A)$-structure via the left $Z(A)$-structure of $M$ (which is induced from the left $A$-structure of $M$).

We now arrive to the first main result of this text.
To explain it, we first recall the commutative situation from \cite{RD}. Let $\k$ be a field, let $A$, $B$ be essentially finite type commutative $\k$-algebras, and let $f:A\to B$ be a $\k$-algebra map. Then the twist of the inverse image functor $B\otimes^{\mrm{L}}_A -:\mrm{D}(\opn{Mod} A) \to \mrm{D}(\opn{Mod} B)$ is given by $f^{!}(-) := \mrm{R}\opn{Hom}_B(B \otimes^{\mrm{L}}_A \mrm{R}\opn{Hom}_A(-,R_A),R_B)$. If $f$ happens to be a finite ring map, then there is an isomorphism of functors $f^{!}(-) \cong \mrm{R}\opn{Hom}_A(B,-)$. Applying this to the finite $\k$-algebra map $A\otimes_{\k} A\to A$, we obtain that 
\[
\mrm{R}\opn{Hom}_{A\otimes_{\k} A}(A,-) \cong \mrm{R}\opn{Hom}_A(A\otimes^{\mrm{L}}_{A\otimes_{\k} A} \mrm{R}\opn{Hom}_{A\otimes_{\k} A}(-,R_{A\otimes_{\k} A}),R_A). 
\]
In other words: for commutative algebras, Hochschild cohomology is the twist of Hochschild homology. For noncommutative algebras, the map $A\otimes_{\k} A^{\opn{op}} \to A$ is not a ring homomorphism. However, it turns out that the above result remains true. Thus, the next result says that for certain noncommutative algebras, Hochschild cohomology over $A$ is isomorphic to the twist of Hochschild homology over $A^{\opn{op}}$.

\begin{thm}\label{thm:hoc-twist}
Let $\k$ be a field, and let $A$ be a $\k$-algebra which is both left and right noetherian. Assume that $A$ satisfies the following:
\begin{enumerate}
\item $A$ has a rigid dualizing complex $R_A$.
\item The $\k$-algebra $A\otimes_{\k} A^{\opn{op}}$ is also left and right noetherian, and the complex $R_A\otimes_{\k} R^{\opn{op}}_A$ is a dualizing complex over $A\otimes_{\k} A^{\opn{op}}$ with a left structure being the outside structure, and the right structure being the inside structure.
\item There is some complex $R_{Z(A)} \in \mrm{D}^{\mrm{b}}(\opn{Mod} Z(A))$ such that there is an isomorphism $R_A \cong \mrm{R}\opn{Hom}_{Z(A)}(A,R_{Z(A)})$ in $\mrm{D}(\opn{Mod} A\otimes_{\k} A^{\opn{op}})$.
\end{enumerate}
Then for any $M\in \mrm{D}_{\mrm{f}}(\opn{Mod} A\otimes_{\k} A^{\opn{op}})$, there is a functorial isomorphism
\[
\mrm{R}\opn{Hom}_{A\otimes_{\k} A^{\opn{op}}}(A,M) \cong 
\mrm{R}\opn{Hom}_{Z(A)}(A^{\opn{op}}\otimes^{\mrm{L}}_{A^{\opn{op}}\otimes_{\k} A} \mrm{R}\opn{Hom}_{A\otimes_{\k} A^{\opn{op}}}(  M,R_A\otimes_{\k} R^{\opn{op}}_A),R_{Z(A)}).
\]
in $\mrm{D}(\opn{Mod} Z(A))$.
\end{thm}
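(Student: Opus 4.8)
The plan is to reduce the Hochschild cohomology $\mrm{R}\opn{Hom}_{A\otimes_{\k} A^{\opn{op}}}(A,M)$ to the asserted ``twist'' by passing through the duality functor $D_e := \mrm{R}\opn{Hom}_{A\otimes_{\k} A^{\opn{op}}}(-,R_A\otimes_{\k} R^{\opn{op}}_A)$ associated to the dualizing complex $R_A\otimes_{\k} R^{\opn{op}}_A$ over $A^e := A\otimes_{\k} A^{\opn{op}}$, together with the one-variable duality functor $D_{Z(A)} := \mrm{R}\opn{Hom}_{Z(A)}(-,R_{Z(A)})$. The key structural input is hypothesis (3), which exhibits $R_A$ as $\mrm{R}\opn{Hom}_{Z(A)}(A,R_{Z(A)})$; combined with the ``dual over $A^e$'' language, this should let us compute $D_e(A)$ explicitly and recognize it as $R_A$. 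Indeed, $A$ is a finitely generated left $A^e$-module, so by Proposition~\ref{prop:hom-dual} (or rather the defining property of the dualizing complex $R_A\otimes_{\k}R^{\opn{op}}_A$ and the rigidity isomorphism $\rho$ of Definition~\ref{def:rigid}) we have $D_e(A) = \mrm{R}\opn{Hom}_{A^e}(A, R_A\otimes_{\k} R^{\opn{op}}_A) \cong R_A$ in $\mrm{D}(\opn{Mod} A^e)$.

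The heart of the argument is then a Hom-tensor / duality manipulation. First I would use the fact that $R_A\otimes_{\k}R^{\opn{op}}_A$ is a dualizing complex over $A^e$ to rewrite $\mrm{R}\opn{Hom}_{A^e}(A,M)$: since $M\in\mrm{D}_{\mrm{f}}(\opn{Mod} A^e)$ and $A$ has finite injective dimension issues controlled by the dualizing complex, we get
\[
\mrm{R}\opn{Hom}_{A^e}(A,M) \cong \mrm{R}\opn{Hom}_{(A^e)^{\opn{op}}}\bigl(D_e(M), D_e(A)\bigr) \cong \mrm{R}\opn{Hom}_{(A^e)^{\opn{op}}}\bigl(D_e(M), R_A\bigr),
\]
using the reflexivity of $M$ and the computation of $D_e(A)$ above; here $(A^e)^{\opn{op}} = A^{\opn{op}}\otimes_{\k} A$. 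Next I would substitute hypothesis (3), $R_A\cong \mrm{R}\opn{Hom}_{Z(A)}(A,R_{Z(A)})$, and apply Hom-tensor adjunction for the ``restriction of scalars'' along $Z(A)\to A^{\opn{op}}\otimes_{\k}A$ (note $Z(A)$ is central and commutative, so there is such a ring map via $z\mapsto z\otimes 1 = 1\otimes z$). This turns
\[
\mrm{R}\opn{Hom}_{A^{\opn{op}}\otimes_{\k}A}\bigl(D_e(M), \mrm{R}\opn{Hom}_{Z(A)}(A,R_{Z(A)})\bigr) \cong \mrm{R}\opn{Hom}_{Z(A)}\bigl(A\otimes^{\mrm{L}}_{A^{\opn{op}}\otimes_{\k}A} D_e(M),\, R_{Z(A)}\bigr),
\]
which is exactly the right-hand side of the claimed formula once we observe $A$ as a left module over $A^{\opn{op}}\otimes_{\k}A$ is the same as $A^{\opn{op}}$ as a left module over $A^{\opn{op}}\otimes_{\k}A$ (i.e.\ $A^{\opn{op}}\otimes^{\mrm{L}}_{A^{\opn{op}}\otimes_{\k}A} D_e(M)$), and $D_e(M) = \mrm{R}\opn{Hom}_{A\otimes_{\k}A^{\opn{op}}}(M,R_A\otimes_{\k}R^{\opn{op}}_A)$ by definition.

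The main obstacles I anticipate are bookkeeping ones rather than conceptual: (i) keeping the left/right and ``inside/outside'' bimodule structures straight throughout, in particular verifying that all the intermediate complexes actually carry the claimed $Z(A)$-structure and that the isomorphisms are $Z(A)$-linear (this is where the involution on $A^e$ and the remark following Definition~\ref{def:rigid} get used, to convert right-module outputs of $\mrm{R}\opn{Hom}_{A^e}$ into left-module ones); and (ii) checking the finiteness/boundedness hypotheses needed to invoke Proposition~\ref{prop:hom-dual} and the reflexivity statement after \cite[Proposition 4.2]{Ye2} --- namely that $A\in\mrm{D}^{\mrm{b}}_{\mrm{f}}(\opn{Mod} A^e)$ (which follows since $A$ is finite over $Z(A)$ hence over $A^e$, using the hypotheses of Theorem~\ref{thm:main}'s setup), and that $D_e(M)$ lands in the appropriate bounded finite derived category so the adjunction and reflexivity apply. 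I expect the genuinely delicate point to be step (ii) combined with making sure $M\mapsto D_e(M)$ and the final answer are compatible with the $Z(A)$-linear structure that the statement demands; once the functoriality and linearity are nailed down, the chain of isomorphisms above is forced.
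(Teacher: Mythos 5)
Your proposal takes essentially the same route as the paper's proof: apply Proposition~\ref{prop:hom-dual} over $A^e := A\otimes_{\k}A^{\opn{op}}$ to pass from $\mrm{R}\opn{Hom}_{A^e}(A,M)$ to $\mrm{R}\opn{Hom}_{A^{\opn{op}}\otimes_{\k}A}(D_e(M),D_e(A))$, use the rigidity isomorphism to identify $D_e(A)\cong R_A$, substitute hypothesis (3), and finish via derived Hom-tensor adjunction along $Z(A)\to A^{\opn{op}}\otimes_{\k}A$; the paper implements the last step with explicit K-projective/K-injective resolutions (plus a K-flatness observation) precisely to track the $Z(A)$-linearity you flagged as the delicate point. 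One small correction to your bookkeeping: $A\in\mrm{D}^{\mrm{b}}_{\mrm{f}}(\opn{Mod} A^e)$ is automatic because $A$ is a cyclic left $A^e$-module (generated by $1$) and $A^e$ is noetherian by hypothesis (2), so you need not import the finiteness-over-center hypothesis from Corollary~\ref{cor:holds-for-finite} --- indeed Theorem~\ref{thm:hoc-twist} is deliberately stated without it.
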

\begin{proof}
The left $A$-structure of $M$ induces a left $Z(A)$-structure on it which commutes with its $A$-structure. Using this structure and the fact that $R_A\otimes_{\k} R^{\opn{op}}_A$ is a dualizing complex over $A\otimes_{\k} A^{\opn{op}}$, by Proposition \ref{prop:hom-dual}, there is functorial isomorphism
\[
\begin{aligned}
\mrm{R}\opn{Hom}_{A\otimes_{\k} A^{\opn{op}}}(A,M) \cong \\ \mrm{R}\opn{Hom}_{A^{\opn{op}}\otimes_{\k} A}(\mrm{R}\opn{Hom}_{A\otimes_{\k} A^{\opn{op}}}(  M,R_A\otimes_{\k} R^{\opn{op}}_A),\mrm{R}\opn{Hom}_{A\otimes_{\k} A^{\opn{op}}}(A,R_A\otimes_{\k} R^{\opn{op}}_A))
\end{aligned}
\]
in $\mrm{D}(\opn{Mod} Z(A))$.
Since $R_A$ is a rigid dualizing complex, by definition, there is an isomorphism $R_A \cong \mrm{R}\opn{Hom}_{A\otimes_{\k} A^{\opn{op}}}(A,R_A\otimes_{\k} R^{\opn{op}}_A)$ in $\mrm{D}(\opn{Mod} A^{\opn{op}}\otimes_{\k} A)$, so the above is naturally isomorphic to 
\begin{equation}\label{eqn:before-center}
\mrm{R}\opn{Hom}_{A^{\opn{op}}\otimes_{\k} A}(\mrm{R}\opn{Hom}_{A\otimes_{\k} A^{\opn{op}}}(  M,R_A\otimes_{\k} R^{\opn{op}}_A),R_A).
\end{equation}
By assumption (3) above, there is an isomorphism
\[
R_A \cong \mrm{R}\opn{Hom}_{Z(A)}({ }_{A\otimes_{\k} A^{\opn{op}}} A_{Z(A)},R_{Z(A)})
\]
Using our convention that $A$ is a left $A\otimes_{\k} A^{\opn{op}}$-module, the right hand side has a structure of a complex of right $A\otimes_{\k} A^{\opn{op}}$-modules. To make this a left module, we use the fact that $A=A^{\opn{op}}$ as $Z(A)$-modules, and that $R_{Z(A)}$ is a symmetric $Z(A)$-module, so we may write
\[
R_A \cong \mrm{R}\opn{Hom}_{Z(A)}( {A^{\opn{op}}}_{A^{\opn{op}}\otimes_{\k} A} ,R_{Z(A)})
\]
where this is an isomorphism of left $A^{\opn{op}}\otimes_{\k} A$-modules.
Plugging this isomorphism into equation (\ref{eqn:before-center}), we obtain an isomorphism between (\ref{eqn:before-center}) and
\[
\mrm{R}\opn{Hom}_{A^{\opn{op}}\otimes_{\k} A}(\mrm{R}\opn{Hom}_{A\otimes_{\k} A^{\opn{op}}}(  M,R_A\otimes_{\k} R^{\opn{op}}_A),\mrm{R}\opn{Hom}_{Z(A)}(A^{\opn{op}},R_{Z(A)})).
\]

Let $P\to \mrm{R}\opn{Hom}_{A\otimes_{\k} A^{\opn{op}}}(  M,R_A\otimes_{\k} R^{\opn{op}}_A)$ be a K-projective resolution over $(A^{\opn{op}}\otimes_{\k} A)\otimes_{\k} (Z(A))^{\opn{op}}$, and let $R_{Z(A)} \to I$ be a K-injective resolution over $Z(A)$. Then the above is naturally isomorphic to
\[
\opn{Hom}_{A^{\opn{op}}\otimes_{\k} A}({ }_{A^{\opn{op}}\otimes_{\k} A}  (P)_{Z(A)},\opn{Hom}_{Z(A)}({ }_{Z(A)}{A^{\opn{op}}}_{A^{\opn{op}}\otimes_{\k} A},{ }_{Z(A)} I)).
\]
(we explicitly specified the various structures on this complex of left $Z(A)$-modules). Then by the hom-tensor adjunction, there is a left $Z(A)$-linear isomorphism
\[
\opn{Hom}_{A^{\opn{op}}\otimes_{\k} A}(P,\opn{Hom}_{Z(A)}(A^{\opn{op}},I)) \cong \opn{Hom}_{Z(A)}(A^{\opn{op}}\otimes_{A^{\opn{op}}\otimes_{\k} A} P,I).
\]
Since the map $(A^{\opn{op}}\otimes_{\k} A) \to (A^{\opn{op}}\otimes_{\k} A)\otimes_{\k} Z(A)$ is flat, it follows that $P$ is K-flat over $A^{\opn{op}}\otimes_{\k} A$. Hence, as $I$ is K-injective over $Z(A)$, there is a left $Z(A)$-linear isomorphism
\[
\opn{Hom}_{Z(A)}(A^{\opn{op}}\otimes_{A^{\opn{op}}\otimes_{\k} A} P,I) \cong 
\mrm{R}\opn{Hom}_{Z(A)} ( A^{\opn{op}}\otimes^{\mrm{L}}_{A^{\opn{op}}\otimes_{\k} A} \mrm{R}\opn{Hom}_{A\otimes_{\k} A^{\opn{op}}}(  M,R_A\otimes_{\k} R^{\opn{op}}_A),R_{Z(A)}).
\]

The composition of all the left $Z(A)$-linear isomorphisms above is the required isomorphism.
\end{proof}
We will later show (Corollary \ref{cor:holds-for-finite}) that the above theorem holds for algebras finite over their center, but first let us deduce some interesting corollaries from it.
Taking $M=A$, in the above theorem, we obtain the following relations between the Hochschild homology complex and Hochschild cohomology complex:
\begin{cor}
Let $\k$ be a field, and let $A$ be a $\k$-algebra which satisfies the assumptions of Theorem \ref{thm:hoc-twist}. Then the Hochschild cohomology complex of $A$ is the $Z(A)$-linear dual of the Hochschild homology complex of $A^{\opn{op}}$ with coefficients in $R_A$. That is:
\[
\mrm{R}\opn{Hom}_{A\otimes_{\k} A^{\opn{op}}}(A,A) \cong \mrm{R}\opn{Hom}_{Z(A)}(A^{\opn{op}}\otimes^{\mrm{L}}_{A^{\opn{op}}\otimes_{\k} A} R_A, R_{Z(A)}).
\]
\end{cor}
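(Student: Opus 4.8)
The plan is to obtain this corollary as a direct specialization of Theorem \ref{thm:hoc-twist}, simply by substituting $M = A$, viewed as a complex of $A$-bimodules in the standard way. The theorem requires $M \in \mrm{D}_{\mrm{f}}(\opn{Mod} A\otimes_{\k} A^{\opn{op}})$, and $A$ is indeed finitely generated (by a single element) as a module over $A\otimes_{\k} A^{\opn{op}}$, so the hypothesis is met. Plugging $M = A$ into the conclusion of Theorem \ref{thm:hoc-twist} immediately gives
\[
\mrm{R}\opn{Hom}_{A\otimes_{\k} A^{\opn{op}}}(A,A) \cong \mrm{R}\opn{Hom}_{Z(A)}(A^{\opn{op}}\otimes^{\mrm{L}}_{A^{\opn{op}}\otimes_{\k} A} \mrm{R}\opn{Hom}_{A\otimes_{\k} A^{\opn{op}}}(A,R_A\otimes_{\k} R^{\opn{op}}_A),R_{Z(A)}),
\]
so the only remaining task is to simplify the inner term $\mrm{R}\opn{Hom}_{A\otimes_{\k} A^{\opn{op}}}(A,R_A\otimes_{\k} R^{\opn{op}}_A)$.

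This is exactly where the rigidity hypothesis enters: by Definition \ref{def:rigid}, the rigidifying isomorphism $\rho$ gives $R_A \cong \mrm{R}\opn{Hom}_{A\otimes_{\k} A^{\opn{op}}}(A,R_A\otimes_{\k} R^{\opn{op}}_A)$ in $\mrm{D}(\opn{Mod} A\otimes_{\k} A^{\opn{op}})$. Substituting this identification into the displayed formula collapses the composite $A^{\opn{op}}\otimes^{\mrm{L}}_{A^{\opn{op}}\otimes_{\k} A}\mrm{R}\opn{Hom}_{A\otimes_{\k} A^{\opn{op}}}(A,R_A\otimes_{\k} R^{\opn{op}}_A)$ to $A^{\opn{op}}\otimes^{\mrm{L}}_{A^{\opn{op}}\otimes_{\k} A} R_A$, which is by definition the Hochschild homology complex of $A^{\opn{op}}$ with coefficients in $R_A$ (recall that for a $\k$-algebra $B$, Hochschild homology with coefficients in a bimodule $M$ is $B\otimes^{\mrm{L}}_{B\otimes_{\k}B^{\opn{op}}}M$, and here $B = A^{\opn{op}}$ so $B\otimes_{\k}B^{\opn{op}} = A^{\opn{op}}\otimes_{\k}A$). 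One must check that the identification is compatible with the $A^{\opn{op}}\otimes_{\k}A$-module structures used in forming the tensor product — but this was already handled inside the proof of Theorem \ref{thm:hoc-twist}, where precisely the structure on $\mrm{R}\opn{Hom}_{A\otimes_{\k} A^{\opn{op}}}(A,R_A\otimes_{\k} R^{\opn{op}}_A)$ as a left $A^{\opn{op}}\otimes_{\k}A$-module was pinned down using the involution on the enveloping algebra, and the rigidity isomorphism is an isomorphism of such modules.

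The resulting identification reads
\[
\mrm{R}\opn{Hom}_{A\otimes_{\k} A^{\opn{op}}}(A,A) \cong \mrm{R}\opn{Hom}_{Z(A)}(A^{\opn{op}}\otimes^{\mrm{L}}_{A^{\opn{op}}\otimes_{\k} A} R_A, R_{Z(A)}),
\]
which is the assertion. I do not expect a genuine obstacle here: the statement is a formal consequence of the theorem and the defining property of a rigid dualizing complex, and all the delicate bookkeeping of module structures has already been carried out in the proof of Theorem \ref{thm:hoc-twist}. The only point requiring a word of care — and the closest thing to a "hard part" — is making sure the rigidity isomorphism is invoked with the correct $A^{\opn{op}}\otimes_{\k}A$-structure (via the canonical involution), so that the substitution into the outer $\mrm{R}\opn{Hom}_{Z(A)}(-,R_{Z(A)})$ is legitimate; but since the relevant structures coincide with those fixed in the theorem's proof, this is immediate.
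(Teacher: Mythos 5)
Your proposal is correct and follows exactly the route the paper takes: specialize Theorem \ref{thm:hoc-twist} at $M=A$, then use the rigidity isomorphism $R_A \cong \mrm{R}\opn{Hom}_{A\otimes_{\k} A^{\opn{op}}}(A,R_A\otimes_{\k} R^{\opn{op}}_A)$ to simplify the inner term. The extra care you take about the $A^{\opn{op}}\otimes_{\k}A$-module structure is already implicit in the paper's one-line proof, since that bookkeeping was done inside the proof of Theorem \ref{thm:hoc-twist}.
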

\begin{proof}
This is immediate from Theorem \ref{thm:hoc-twist}, using the rigidity isomorphism 
\[
R_A \cong \mrm{R}\opn{Hom}_{A\otimes_{\k} A^{\opn{op}}}(A,R_A\otimes_{\k} R^{\opn{op}}_A).
\]
\end{proof}

\begin{lem}\label{lem:hochsep}
Let $\k$ be a field, and let $A$ be a $\k$-algebra.
For any $M \in \mrm{D}^{-}(\opn{Mod} A)$ and any $N \in \mrm{D}^{-}(\opn{Mod} A^{\opn{op}})$, there is a bifunctorial isomorphism
\[
A \otimes^{\mrm{L}}_{A\otimes_{\k} A^{\opn{op}}} (M\otimes_{\k} N) \cong N\otimes^{\mrm{L}}_A M
\]
in $\mrm{D}(\opn{Mod} Z(A))$. Here, on both sides the $Z(A)$-structure is induced from the left $A$-structure of $M$.
\end{lem}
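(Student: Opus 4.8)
The plan is to reduce everything to a statement about K-flat resolutions and associativity of the derived tensor product, together with the classical (underived) isomorphism $A\otimes_{A\otimes_\k A^{\opn{op}}}(M\otimes_\k N)\cong N\otimes_A M$. First I would recall why this underived isomorphism holds at the level of modules: if $M$ is a left $A$-module and $N$ is a right $A$-module, then $M\otimes_\k N$ is an $A\otimes_\k A^{\opn{op}}$-module via the outside action, and the map $a\otimes(m\otimes n)\mapsto n\otimes am$ (equivalently $\mapsto na\otimes m$, using the $A\otimes_\k A^{\opn{op}}$-balancing) gives a well-defined isomorphism $A\otimes_{A\otimes_\k A^{\opn{op}}}(M\otimes_\k N)\xrightarrow{\simeq} N\otimes_A M$, which is manifestly $Z(A)$-linear when we equip both sides with the $Z(A)$-action coming from the left $A$-action on $M$. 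This is essentially the same computation Cartan--Eilenberg use for (\ref{eqn:classic-h}).

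Next I would set up the derived version. Choose a K-flat resolution $P\to M$ over $A$ (by complexes of left $A$-modules) and a K-flat resolution $Q\to N$ over $A^{\opn{op}}$; since $\k$ is a field, every $\k$-module is flat, so $P\otimes_\k Q\to M\otimes_\k N$ is a K-flat resolution of $M\otimes_\k N$ over $A\otimes_\k A^{\opn{op}}$ (one checks K-flatness over the enveloping algebra from K-flatness of the factors, using that $(A\otimes_\k A^{\opn{op}})$-modules of the form ``left $A$-flat $\otimes_\k$ right $A$-flat'' behave well; alternatively resolve $M\otimes_\k N$ directly). Then
\[
A\otimes^{\mrm{L}}_{A\otimes_\k A^{\opn{op}}}(M\otimes_\k N)\simeq A\otimes_{A\otimes_\k A^{\opn{op}}}(P\otimes_\k Q),
\]
and applying the underived isomorphism above termwise (it is natural, hence a morphism of complexes) identifies this with $Q\otimes_A P$. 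Finally, since $P$ is K-flat over $A$, $Q\otimes_A P\simeq N\otimes^{\mrm{L}}_A M$, and tracing the $Z(A)$-structures through each step shows the composite isomorphism is $Z(A)$-linear. Bifunctoriality is automatic since every arrow used is natural in $M$ and $N$.

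The main obstacle is bookkeeping rather than anything deep: one must be careful that the K-flat resolution $P\otimes_\k Q$ of $M\otimes_\k N$ really is K-flat over the \emph{enveloping} algebra $A\otimes_\k A^{\opn{op}}$ (not merely over $A$ and over $A^{\opn{op}}$ separately), and that the chosen underived isomorphism respects all the relevant module structures simultaneously — in particular the $Z(A)$-action, and the fact that the $A\otimes_\k A^{\opn{op}}$-balancing is what makes the map $a\otimes(m\otimes n)\mapsto na\otimes m$ well defined. Once those structure checks are in place, the derived statement follows formally. An alternative, cleaner route avoiding the K-flatness subtlety over $A\otimes_\k A^{\opn{op}}$ is to invoke the standard base-change/adjunction identity $A\otimes^{\mrm{L}}_{A\otimes_\k A^{\opn{op}}}(-\otimes_\k-)\simeq(-)\otimes^{\mrm{L}}_A(-)$ directly as a derived functor isomorphism (this is well known; see e.g.\ \cite{Lo}), and then simply record that the $Z(A)$-linear structures match; I would present the resolution-based argument as the primary one since it makes the $Z(A)$-equivariance transparent.
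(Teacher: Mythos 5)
Your proposal is correct and follows essentially the same route as the paper: resolve $M$ and $N$, tensor over $\k$ to obtain a resolution of $M\otimes_\k N$ over the enveloping algebra, and apply the underived Cartan--Eilenberg isomorphism termwise. The only (minor) difference is that you work with K-flat resolutions and flag the worry that $P\otimes_\k Q$ might not be K-flat over $A\otimes_\k A^{\opn{op}}$; the paper sidesteps this entirely by taking bounded above \emph{projective} resolutions (available since $M\in\mrm{D}^-(\opn{Mod} A)$ and $N\in\mrm{D}^-(\opn{Mod} A^{\opn{op}})$), whose term-by-term $\k$-tensor product is manifestly a bounded above complex of projective $A\otimes_\k A^{\opn{op}}$-modules, since a projective left $A$-module tensored over $\k$ with a projective right $A$-module is a direct summand of a free $A\otimes_\k A^{\opn{op}}$-module.
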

\begin{proof}
Replacing $M$ and $N$ by projective resolutions $P$ and $Q$ respectively, it is enough to show that
\[
A \otimes_{A\otimes_{\k} A^{\opn{op}}} (P\otimes_{\k} Q) \cong Q \otimes_A P.
\]
If $P$ and $Q$ are modules, this is shown in \cite[Theorem IX.2.8]{CE}, and the same proof generalizes to complexes.
\end{proof}

\begin{lem}\label{lem:sep}
Let $\k$ be a field, and let $A$ be a $\k$-algebra. Assume $A$ is both left and right noetherian.
Let $M\in \mrm{D}_{\mrm{f}}^{-}(\opn{Mod} A)$, let $N\in \mrm{D}_{\mrm{f}}^{-}(\opn{Mod} A^{\opn{op}})$, and let $R,R' \in \mrm{D}^{\mrm{b}}(\opn{Mod} A\otimes_{\k} A^{\opn{op}})$. Then there is a functorial isomorphism
\[
\mrm{R}\opn{Hom}_{A}(M,R) \otimes_{\k} \mrm{R}\opn{Hom}_{A^{\opn{op}}}(N,R') \cong \mrm{R}\opn{Hom}_{A\otimes_{\k} A^{\opn{op}}}(M\otimes_{\k} N,R\otimes_{\k} R').
\]
in $\mrm{D}(\opn{Mod} A^{\opn{op}}\otimes_{\k} A)$.
\end{lem}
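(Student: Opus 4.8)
The plan is to reduce to a statement about complexes of modules (not derived categories) by choosing suitable resolutions, and then to invoke a commutation of $\opn{Hom}$ with $\otimes_{\k}$ over a field, which works well precisely because $\k$ is a field and the modules involved are finitely generated. More precisely, I would first take a bounded-above complex $P$ of finitely generated projective left $A$-modules with $P\simeq M$, and a bounded-above complex $Q$ of finitely generated projective right $A$-modules with $Q\simeq N$; then $P\otimes_{\k}Q$ is a bounded-above complex of finitely generated projective $A\otimes_{\k}A^{\opn{op}}$-modules representing $M\otimes_{\k}N$ (here $A\otimes_{\k}A^{\opn{op}}$ is noetherian is not needed for this part, only that $\k$ is a field so that $\otimes_{\k}$ preserves projectives and finite generation). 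Next, replace $R$ by a bounded complex $I$ of injective left $A$-modules and $R'$ by a bounded complex $I'$ of injective right $A$-modules; then $I\otimes_{\k}I'$ need not be a complex of injective $A\otimes_{\k}A^{\opn{op}}$-modules, but since $P\otimes_{\k}Q$ is K-projective (being bounded above with projective terms) the right-hand $\mrm{R}\opn{Hom}$ may still be computed as $\opn{Hom}_{A\otimes_{\k}A^{\opn{op}}}(P\otimes_{\k}Q,\,I\otimes_{\k}I')$, and similarly the two factors on the left are $\opn{Hom}_A(P,I)$ and $\opn{Hom}_{A^{\opn{op}}}(Q,I')$.

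With the derived functors thus rigidified, the core of the proof is the natural map
\[
\opn{Hom}_A(P,I)\otimes_{\k}\opn{Hom}_{A^{\opn{op}}}(Q,I') \longrightarrow \opn{Hom}_{A\otimes_{\k}A^{\opn{op}}}(P\otimes_{\k}Q,\,I\otimes_{\k}I'),
\]
which sends $\phi\otimes\psi$ to $\phi\otimes\psi$ (acting componentwise). I would check this is an isomorphism of complexes of $A^{\opn{op}}\otimes_{\k}A$-modules; the $A^{\opn{op}}\otimes_{\k}A$-action comes from the (right $A$)-structure on $P$ contributing a left $A^{\opn{op}}$-structure and the (left $A$)-structure on $Q$ contributing a left $A$-structure, or rather from the residual bimodule structures on $I,I'$—I would pin this down carefully degree by degree. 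Degreewise, after passing to a single homological degree, it suffices to treat a single finitely generated projective left $A$-module $P_0$ (a summand of $A^{m}$) and a single finitely generated projective right $A$-module $Q_0$ (a summand of $A^{n}$); by additivity we reduce to $P_0=A$, $Q_0=A$, where the claim becomes the obvious identity $I\otimes_{\k}I'\cong\opn{Hom}_{A\otimes_{\k}A^{\opn{op}}}(A\otimes_{\k}A,\,I\otimes_{\k}I')$ together with $I=\opn{Hom}_A(A,I)$, $I'=\opn{Hom}_{A^{\opn{op}}}(A,I')$. Compatibility with the differentials and with the $A^{\opn{op}}\otimes_{\k}A$-actions is then a routine (if notation-heavy) verification, and taking cohomology yields the asserted bifunctorial isomorphism in $\mrm{D}(\opn{Mod} A^{\opn{op}}\otimes_{\k}A)$.

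The main obstacle I anticipate is bookkeeping rather than conceptual: one must be scrupulous about which of the several module structures on $R\otimes_{\k}R'$ and on the various $\opn{Hom}$ complexes is being used, so that the final isomorphism genuinely lands in $\mrm{D}(\opn{Mod} A^{\opn{op}}\otimes_{\k}A)$ and is functorial in all four arguments $M,N,R,R'$. A secondary technical point is justifying that $\opn{Hom}_{A\otimes_{\k}A^{\opn{op}}}(P\otimes_{\k}Q,\,I\otimes_{\k}I')$ really computes $\mrm{R}\opn{Hom}_{A\otimes_{\k}A^{\opn{op}}}(M\otimes_{\k}N,\,R\otimes_{\k}R')$: this is fine because $P\otimes_{\k}Q$ is K-projective, so no K-injective resolution of the target is needed. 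Boundedness of $R,R'$ guarantees $I\otimes_{\k}I'$ is a genuine (two-sided bounded) complex so there are no convergence issues. Finiteness of $M$ and $N$ enters only to ensure $P,Q$ can be chosen degreewise finitely generated, which is what makes the elementary $P_0=Q_0=A$ reduction legitimate via finite direct sums; without it the natural map above need not be an isomorphism.
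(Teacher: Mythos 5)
Your argument is essentially the one that the paper outsources to \cite[Lemma 8.4]{YZ1}: compute all three derived Homs using bounded-above complexes $P$, $Q$ of finitely generated projectives (which exist because $A$ and $A^{\opn{op}}$ are noetherian and $M,N\in\mrm{D}_{\mrm{f}}^{-}$), observe that $P\otimes_{\k}Q$ is again a bounded-above complex of finitely generated projective $A\otimes_{\k}A^{\opn{op}}$-modules, write down the natural $\otimes$-$\opn{Hom}$ comparison map at the chain level, and check it degreewise by additivity from the case $P_{0}=Q_{0}=A$. The paper simply cites the published proof and notes that it extends from bounded to bounded-above $M,N$; you reproduce that proof directly, so the two routes coincide in substance.

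The one place where your write-up is both off-target and unnecessary is the passage where you ``replace $R$ by a bounded complex $I$ of injective left $A$-modules'' (and likewise $R'$). First, $R$ is merely bounded, not of finite injective dimension over $A$, so a \emph{bounded} injective resolution need not exist. Second, and more seriously for the statement you want, $R$ carries an $A\otimes_{\k}A^{\opn{op}}$-structure, and if you pass to an injective resolution only in $\opn{Mod}A$ you discard the right $A$-action; then $\opn{Hom}_{A}(P,I)$ is just a complex of $\k$-modules, and the resulting isomorphism would not be $A^{\opn{op}}\otimes_{\k}A$-linear, which is the whole point of the lemma as it is used in Theorem \ref{thm:main}. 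The fix is simply to omit this step: since $P$, $Q$, and $P\otimes_{\k}Q$ are all K-projective, you may compute $\mrm{R}\opn{Hom}_{A}(M,R)=\opn{Hom}_{A}(P,R)$, $\mrm{R}\opn{Hom}_{A^{\opn{op}}}(N,R')=\opn{Hom}_{A^{\opn{op}}}(Q,R')$, and $\mrm{R}\opn{Hom}_{A\otimes_{\k}A^{\opn{op}}}(M\otimes_{\k}N,R\otimes_{\k}R')=\opn{Hom}_{A\otimes_{\k}A^{\opn{op}}}(P\otimes_{\k}Q,R\otimes_{\k}R')$ with $R,R'$ untouched; then $\opn{Hom}_{A}(P,R)$ is manifestly a complex of right $A$-modules via the residual $A^{\opn{op}}$-structure of $R$, and the chain-level comparison map is $A^{\opn{op}}\otimes_{\k}A$-linear on the nose. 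Your degreewise reduction then goes through verbatim (the boundedness of $R,R'$ together with the bounded-above, finitely generated hypotheses on $P,Q$ ensure that every graded piece involved is a finite direct sum, so the additivity argument is legitimate), and Koszul signs aside, the map $\phi\otimes\psi\mapsto\phi\otimes\psi$ is a chain isomorphism. With that single correction your proof is complete.
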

\begin{proof}
This is proved in \cite[Lemma 8.4]{YZ1}. The assumption there is that $M$ and $N$ are actually bounded, but the same proof holds in this more general case.
\end{proof}

We may now prove the main theorem of this note, a generalization of \cite[Theorem 4.1]{AILN} to certain noncommutative algebras:

\begin{thm}\label{thm:main}
Let $\k$ be a field, and let $A$ be a $\k$-algebra which satisfies the conditions of Theorem \ref{thm:hoc-twist}. Then for any $M\in \mrm{D}_{\mrm{f}}^{-}(\opn{Mod} A)$, and any $N\in \mrm{D}_{\mrm{f}}^{-}(\opn{Mod} A^{\opn{op}})$ there is a bifunctorial isomorphism
\[
\mrm{R}\opn{Hom}_{A\otimes_{\k} A^{\opn{op}}}(A,M\otimes_{\k} N) \cong \mrm{R}\opn{Hom}_{Z(A)}(\mrm{R}\opn{Hom}_{A^{\opn{op}}}(N,R^{\opn{op}}_A)\otimes^{\mrm{L}}_{A^{\opn{op}}}\mrm{R}\opn{Hom}_A(M,R_A),R_{Z(A)})
\]
in $\mrm{D}(\opn{Mod} Z(A))$.
\end{thm}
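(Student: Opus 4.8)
The strategy is to feed the coefficient bimodule $M\otimes_{\k} N$ into Theorem~\ref{thm:hoc-twist} and then to rewrite the two ingredients that appear on the right-hand side of that theorem --- the ``dual'' $\mrm{R}\opn{Hom}_{A\otimes_{\k} A^{\opn{op}}}(M\otimes_{\k} N, R_A\otimes_{\k} R^{\opn{op}}_A)$ and the Hochschild-homology-type expression $A^{\opn{op}}\otimes^{\mrm{L}}_{A^{\opn{op}}\otimes_{\k} A}(-)$ --- by means of Lemma~\ref{lem:sep} and Lemma~\ref{lem:hochsep}, respectively. All three cited statements are functorial, so bifunctoriality in $M$ and $N$ will come for free, and the whole argument is essentially an assembly of these results.

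In detail, I would first note that $M\otimes_{\k} N$ is an admissible coefficient complex: since $\k$ is a field, the Künneth formula gives $H^i(M\otimes_{\k} N)=\boplus_{p+q=i}H^p(M)\otimes_{\k}H^q(N)$, a finite direct sum (as $M,N$ are bounded above) of finitely generated $A\otimes_{\k} A^{\opn{op}}$-modules, vanishing for $i\gg0$; thus $M\otimes_{\k} N\in\mrm{D}^{-}_{\mrm{f}}(\opn{Mod} A\otimes_{\k} A^{\opn{op}})\subseteq\mrm{D}_{\mrm{f}}(\opn{Mod} A\otimes_{\k} A^{\opn{op}})$, and Theorem~\ref{thm:hoc-twist} applies, yielding in $\mrm{D}(\opn{Mod} Z(A))$
\[
\mrm{R}\opn{Hom}_{A\otimes_{\k} A^{\opn{op}}}(A,M\otimes_{\k} N)\cong\mrm{R}\opn{Hom}_{Z(A)}\bigl(A^{\opn{op}}\otimes^{\mrm{L}}_{A^{\opn{op}}\otimes_{\k} A}\mrm{R}\opn{Hom}_{A\otimes_{\k} A^{\opn{op}}}(M\otimes_{\k} N,R_A\otimes_{\k} R^{\opn{op}}_A),R_{Z(A)}\bigr).
\]
Next, Lemma~\ref{lem:sep} applied with $R=R_A$ and $R'=R^{\opn{op}}_A$ (both bounded complexes of bimodules, and $R\otimes_{\k}R'=R_A\otimes_{\k} R^{\opn{op}}_A$ carries exactly the outside/inside structure figuring in Theorem~\ref{thm:hoc-twist}) gives an $A^{\opn{op}}\otimes_{\k} A$-linear isomorphism
\[
\mrm{R}\opn{Hom}_{A\otimes_{\k} A^{\opn{op}}}(M\otimes_{\k} N,R_A\otimes_{\k} R^{\opn{op}}_A)\cong\mrm{R}\opn{Hom}_A(M,R_A)\otimes_{\k}\mrm{R}\opn{Hom}_{A^{\opn{op}}}(N,R^{\opn{op}}_A),
\]
in which the first tensor factor is naturally a left $A^{\opn{op}}$-module and the second a left $A$-module. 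Substituting, the inner expression becomes $A^{\opn{op}}\otimes^{\mrm{L}}_{A^{\opn{op}}\otimes_{\k} A}\bigl(\mrm{R}\opn{Hom}_A(M,R_A)\otimes_{\k}\mrm{R}\opn{Hom}_{A^{\opn{op}}}(N,R^{\opn{op}}_A)\bigr)$, and applying Lemma~\ref{lem:hochsep} to the $\k$-algebra $A^{\opn{op}}$ (with $(A^{\opn{op}})^{\opn{op}}=A$ and $Z(A^{\opn{op}})=Z(A)$), taking $\mrm{R}\opn{Hom}_A(M,R_A)$ as the left module and $\mrm{R}\opn{Hom}_{A^{\opn{op}}}(N,R^{\opn{op}}_A)$ as the right module, rewrites it as $\mrm{R}\opn{Hom}_{A^{\opn{op}}}(N,R^{\opn{op}}_A)\otimes^{\mrm{L}}_{A^{\opn{op}}}\mrm{R}\opn{Hom}_A(M,R_A)$. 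Composing the three isomorphisms gives the statement. (One checks along the way that the finiteness and boundedness hypotheses of the cited lemmas hold for these particular arguments, using that $R_A$, $R^{\opn{op}}_A$ and $R_{Z(A)}$ have finite injective dimension.)

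The step I expect to require real care is the matching of the $Z(A)$-module structures, so that the resulting isomorphism is $Z(A)$-linear for the structures prescribed on its two sides. On the left, by the conventions recalled before Theorem~\ref{thm:hoc-twist}, the $Z(A)$-action is the one induced by the left $A$-action on $M\otimes_{\k}N$, that is, by the left $A$-action on $M$; one should note this equals the action induced by the right $A$-action on $N$, since for central $z$ the elements $z\otimes1$ and $1\otimes z$ of $A\otimes_{\k}A^{\opn{op}}$ act identically on $A$ and hence induce the same operator on $\mrm{R}\opn{Hom}_{A\otimes_{\k}A^{\opn{op}}}(A,-)$. On the right, Lemma~\ref{lem:hochsep} (applied to $A^{\opn{op}}$) hands us the $Z(A)$-structure through the left $A^{\opn{op}}$-structure of $\mrm{R}\opn{Hom}_A(M,R_A)$, i.e.\ through right multiplication by $Z(A)$ on $R_A$, whereas the structure produced by Theorem~\ref{thm:hoc-twist} together with Lemma~\ref{lem:sep} is, by contravariance of $\mrm{R}\opn{Hom}$, inherited from the left $A$-action on $M$, hence from left multiplication by $Z(A)$ on $R_A$. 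To reconcile the two I would invoke hypothesis~(3) of Theorem~\ref{thm:hoc-twist}: the isomorphism $R_A\cong\mrm{R}\opn{Hom}_{Z(A)}(A,R_{Z(A)})$ forces the left and right $Z(A)$-actions on $R_A$ to coincide (both are computed from the $A$-bimodule $A$, on which $Z(A)$ acts symmetrically), and I would also use that in a balanced tensor product $(-)\otimes^{\mrm{L}}_{A^{\opn{op}}}(-)$ the $Z(A)$-action coming from either factor is the same. Once this bookkeeping is settled, the composite is the desired $Z(A)$-linear bifunctorial isomorphism.
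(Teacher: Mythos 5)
Your proposal follows exactly the paper's proof: apply Theorem~\ref{thm:hoc-twist} to the bimodule $M\otimes_{\k}N$, then use Lemma~\ref{lem:sep} to decompose $\mrm{R}\opn{Hom}_{A\otimes_{\k} A^{\opn{op}}}(M\otimes_{\k} N,R_A\otimes_{\k} R^{\opn{op}}_A)$ as an external tensor product and Lemma~\ref{lem:hochsep} (applied to $A^{\opn{op}}$) to collapse the outer $\otimes^{\mrm{L}}$. The bookkeeping you carry out on the $Z(A)$-module structures, reconciling the action through $\mrm{R}\opn{Hom}_A(M,R_A)$ coming from the left $A$-action on $M$ with the one coming from the left $A^{\opn{op}}$-action required by Lemma~\ref{lem:hochsep} via condition~(3) of Theorem~\ref{thm:hoc-twist}, is correct and more explicit than the paper, which simply asserts that each intermediate isomorphism is $Z(A)$-linear.
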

\begin{proof}
By Theorem \ref{thm:hoc-twist}, there is a $Z(A)$-linear isomorphism
\begin{equation}\label{eqn:thm-before-sep}
\mrm{R}\opn{Hom}_{A\otimes_{\k} A^{\opn{op}}}(A,M\otimes_{\k} N) \cong
\mrm{R}\opn{Hom}_{Z(A)}(A^{\opn{op}}\otimes^{\mrm{L}}_{A^{\opn{op}}\otimes_{\k} A} \mrm{R}\opn{Hom}_{A\otimes_{\k} A^{\opn{op}}}(  M\otimes_{\k} N,R_A\otimes_{\k} R^{\opn{op}}_A),R_{Z(A)}).
\end{equation}
By Lemma \ref{lem:sep}, there is a functorial $Z(A)$-linear isomorphism
\[
A^{\opn{op}}\otimes^{\mrm{L}}_{A^{\opn{op}}\otimes_{\k} A} \mrm{R}\opn{Hom}_{A\otimes_{\k} A^{\opn{op}}}(  M\otimes_{\k} N,R_A\otimes_{\k} R^{\opn{op}}_A) \cong
A^{\opn{op}}\otimes^{\mrm{L}}_{A^{\opn{op}}\otimes_{\k} A} (\mrm{R}\opn{Hom}_{A}(M,R_A) \otimes_{\k} \mrm{R}\opn{Hom}_{A^{\opn{op}}}(N,R^{\opn{op}}_A)),
\]
and by Lemma \ref{lem:hochsep} (applied to the ring $A^{\opn{op}}$), there is a functorial $Z(A)$-linear isomorphism
\[
A^{\opn{op}}\otimes^{\mrm{L}}_{A^{\opn{op}}\otimes_{\k} A} (\mrm{R}\opn{Hom}_{A}(M,R_A) \otimes_{\k} \mrm{R}\opn{Hom}_{A^{\opn{op}}}(N,R^{\opn{op}}_A)) \cong 
\mrm{R}\opn{Hom}_{A^{\opn{op}}}(N,R^{\opn{op}}_A)\otimes^{\mrm{L}}_{A^{\opn{op}}} \mrm{R}\opn{Hom}_{A}(M,R_A).
\]
Plugging this to equation (\ref{eqn:thm-before-sep}), we obtain the result.
\end{proof}

\begin{cor}\label{cor:holds-for-finite}
Let $\k$ be a field, let $A$ be a $\k$-algebra, and assume that its center $Z(A)$ is essentially finite type over $\k$ and that $A$ is finite over $Z(A)$. Then Theorems \ref{thm:hoc-twist} and \ref{thm:main} holds for $A$, with $R_{Z(A)}$ being the rigid dualizing complex over $Z(A)$.
\end{cor}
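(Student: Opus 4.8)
The plan is to verify that the hypotheses (1)--(3) of Theorem \ref{thm:hoc-twist} are satisfied under the assumption that $C := Z(A)$ is essentially of finite type over $\k$ and $A$ is finite over $C$; once this is done, Theorems \ref{thm:hoc-twist} and \ref{thm:main} apply verbatim. First I would record that $C$ is noetherian (being essentially of finite type over a field) and that $A$, being a finite module over the noetherian ring $C$ which sits inside its center, is both left and right noetherian; moreover $A\otimes_\k A^{\opn{op}}$ is finite over $C\otimes_\k C$, which is again essentially of finite type over $\k$, hence noetherian, so $A\otimes_\k A^{\opn{op}}$ is left and right noetherian as well. This takes care of the blanket noetherian hypotheses.

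Next I would address condition (1): by Proposition \ref{prop:finite-has-rigid} applied to the finite $C$-algebra $A$, the ring $C$ has a rigid dualizing complex $R_C$ and $A$ has a rigid dualizing complex $R_A \cong \mrm{R}\opn{Hom}_C(A,R_C)$. This simultaneously delivers condition (3): taking $R_{Z(A)} := R_C$, the isomorphism $R_A \cong \mrm{R}\opn{Hom}_{Z(A)}(A,R_{Z(A)})$ in $\mrm{D}(\opn{Mod} A\otimes_\k A^{\opn{op}})$ is exactly the second assertion of Proposition \ref{prop:finite-has-rigid} (one should note that the isomorphism there is as complexes of $A$-bimodules, since $\mrm{R}\opn{Hom}_C(A,R_C)$ carries the bimodule structure coming from the two commuting $A$-actions on $A$, and $C$ being central makes these compatible). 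For condition (2), I would apply Proposition \ref{prop:dual-on-ae}: since $C$ is essentially of finite type over $\k$ and $A$ is a finite $C$-algebra, the complex $R_A\otimes_\k R^{\opn{op}}_A$ is a (rigid) dualizing complex over $A\otimes_\k A^{\opn{op}}$, with precisely the outside/inside left and right structures demanded in hypothesis (2) of Theorem \ref{thm:hoc-twist}. Combining these observations, all three hypotheses hold, and the conclusion follows.

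The one point requiring a little care — and the only place I would expect friction — is bookkeeping of module structures: one must check that the $R_{Z(A)}$ supplied by Proposition \ref{prop:finite-has-rigid} is the \emph{same} object (with the same $Z(A)$-action) needed to make condition (3) mesh with the use of symmetry of $R_{Z(A)}$ as a $Z(A)$-module inside the proof of Theorem \ref{thm:hoc-twist}. Since $Z(A)$ is commutative, every $Z(A)$-module is symmetric, so this is automatic; I would just remark on it. Likewise the identification of the left $A\otimes_\k A^{\opn{op}}$-structure on $\mrm{R}\opn{Hom}_C(A,R_C)$ with that of a genuine $A$-bimodule is exactly the content cited from \cite[Proposition 5.9]{Ye2}, so no new argument is needed. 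Thus the corollary is essentially an assembly of Propositions \ref{prop:dual-on-ae} and \ref{prop:finite-has-rigid} with the elementary noetherian remarks, and I would present it in that order and that brevity.
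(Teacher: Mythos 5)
Your proof is correct and takes the same route the paper does: it reduces the corollary to verifying conditions (1)--(3) of Theorem \ref{thm:hoc-twist}, deducing (1) and (3) from Proposition \ref{prop:finite-has-rigid} and (2) from Proposition \ref{prop:dual-on-ae}. The paper's own proof is a one-liner citing exactly these two propositions; your version simply spells out the noetherianity bookkeeping and the bimodule-structure compatibility, which is a reasonable elaboration rather than a different argument.
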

\begin{proof}
Conditions (1), (2) and (3) of the theorem follows from Propositions \ref{prop:dual-on-ae} and \ref{prop:finite-has-rigid}.
\end{proof}

\begin{rem}\label{rem:commutative}
If $A$ is assumed to be commutative and essentially of finite type over $\k$, then it satisfies all the conditions of the theorem. In that case, since $A=A^{\opn{op}} = Z(A)$, the theorem says that 
\[
\mrm{R}\opn{Hom}_{A\otimes_{\k} A}(A,M\otimes_{\k} N) \cong \mrm{R}\opn{Hom}_A(\mrm{R}\opn{Hom}_A(N,R_A)\otimes^{\mrm{L}}_A\mrm{R}\opn{Hom}_A(M,R_A),R_A).
\]
Using the derived tensor hom-adjunction and the fact that $R_A$ is a dualizing complex, the right hand side of this is naturally isomorphic to
\[
\mrm{R}\opn{Hom}_A(\mrm{R}\opn{Hom}_A(M,R_A),N)
\]
which is just \cite[Theorem 4.1]{AILN} in the case where $\k$ is a field. There, under the assumption that $\k$ is a field, it is assumed that $M\in \mrm{D}^{\mrm{b}}_{\mrm{f}}(\opn{Mod} A)$, and that $N \in \mrm{D}(\opn{Mod} A)$. Thus, our theorem gives new information even in the commutative case, allowing $M$ to be only bounded above instead of bounded.
\end{rem}

\begin{rem}
Conditions (1) and (2) of Theorem \ref{thm:hoc-twist} holds for many interesting noncommutative algebras. See for example \cite[Theorem 8.1]{YZ2} and
\cite[Corollary 8.7]{VdB} for condition (1), and \cite[Theorem 8.5]{YZ1} for condition (2).
The author does not know of examples when condition (3) is satisfied, except for the case where $A$ is finite over $Z(A)$.
\end{rem}

\begin{rem}\label{rem:lim-of-thm}
If a $\k$-algebra $A$ satisfies conditions (1) and (2) of Theorem \ref{thm:hoc-twist}, and if Theorem \ref{thm:hoc-twist} holds for $A$ (for some complex $R_{Z(A)} \in \mrm{D}(\opn{Mod} Z(A))$), then condition (3) of the theorem also holds for $A$. Indeed, as we have seen, whenever Theorem \ref{thm:hoc-twist} holds for $A$, Theorem \ref{thm:main} also holds for $A$, and then, by rigidity
\[
R_A \cong \mrm{R}\opn{Hom}_{A\otimes_{\k} A^{\opn{op}}}(A,R_A\otimes_{\k} R^{\opn{op}}_A) \cong \mrm{R}\opn{Hom}_{Z(A)}(\mrm{R}\opn{Hom}_{A^{\opn{op}}}(R^{\opn{op}}_A,R^{\opn{op}}_A)\otimes^{\mrm{L}}_{A^{\opn{op}}} \mrm{R}\opn{Hom}_A(R_A,R_A),R_{Z(A)}),
\]
so that $R_A \cong \mrm{R}\opn{Hom}_{Z(A)}(A,R_{Z(A)})$.
\end{rem}

Taking cohomology on both sides of Theorem \ref{thm:main}, we obtain the following reduction formula for Hochschild cohomology with tensor-decomposable coefficients:
\begin{cor}
Let $\k$ be a field, and let $A$ be a $\k$-algebra which satisfy the conditions of Theorem \ref{thm:hoc-twist}. Then for any $M\in \mrm{D}_{\mrm{f}}^{-}(\opn{Mod} A)$, and any $N\in \mrm{D}_{\mrm{f}}^{-}(\opn{Mod} A^{\opn{op}})$ there is a $Z(A)$-linear isomorphism
\[
\mrm{HH}^n(A|\k;M\otimes_{\k} N) \cong \opn{Ext}^n_{Z(A)}(\mrm{R}\opn{Hom}_{A^{\opn{op}}}(N,R^{\opn{op}}_A)\otimes^{\mrm{L}}_{A^{\opn{op}}}\mrm{R}\opn{Hom}_A(M,R_A),R_{Z(A)})
\]
\end{cor}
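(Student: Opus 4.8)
The plan is simply to take cohomology in degree $n$ on both sides of the isomorphism furnished by Theorem~\ref{thm:main}. Concretely: since $A$ satisfies the hypotheses of Theorem~\ref{thm:hoc-twist}, Theorem~\ref{thm:main} supplies, for $M\in \mrm{D}_{\mrm{f}}^{-}(\opn{Mod} A)$ and $N\in \mrm{D}_{\mrm{f}}^{-}(\opn{Mod} A^{\opn{op}})$, a $Z(A)$-linear isomorphism
\[
\mrm{R}\opn{Hom}_{A\otimes_{\k} A^{\opn{op}}}(A,M\otimes_{\k} N) \cong \mrm{R}\opn{Hom}_{Z(A)}(\mrm{R}\opn{Hom}_{A^{\opn{op}}}(N,R^{\opn{op}}_A)\otimes^{\mrm{L}}_{A^{\opn{op}}}\mrm{R}\opn{Hom}_A(M,R_A),R_{Z(A)})
\]
in $\mrm{D}(\opn{Mod} Z(A))$. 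Applying the functor $H^n\colon \mrm{D}(\opn{Mod} Z(A))\to \opn{Mod} Z(A)$ produces an isomorphism of $Z(A)$-modules between $H^n$ of the two complexes.

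It then remains only to identify the two cohomology modules with the objects named in the statement. By the conventions recalled at the beginning of this section, $H^n$ of $\mrm{R}\opn{Hom}_{A\otimes_{\k} A^{\opn{op}}}(A,M\otimes_{\k} N)$ is $\opn{Ext}^n_{A\otimes_{\k} A^{\opn{op}}}(A,M\otimes_{\k} N)=\mrm{HH}^n(A|\k;M\otimes_{\k} N)$, with its $Z(A)$-module structure being exactly the one induced from the left $A$-structure of $M$ — which is the structure carried along by the isomorphism of Theorem~\ref{thm:main}. Likewise, writing $X := \mrm{R}\opn{Hom}_{A^{\opn{op}}}(N,R^{\opn{op}}_A)\otimes^{\mrm{L}}_{A^{\opn{op}}}\mrm{R}\opn{Hom}_A(M,R_A)$, the module $H^n$ of $\mrm{R}\opn{Hom}_{Z(A)}(X,R_{Z(A)})$ is by definition $\opn{Ext}^n_{Z(A)}(X,R_{Z(A)})$, which is the displayed right-hand side. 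I do not expect any genuine obstacle: all the content lies in Theorem~\ref{thm:main}, and passing to $H^n$ is routine — one uses only that $H^n$ is a $Z(A)$-linear functor on $\mrm{D}(\opn{Mod} Z(A))$, so that $Z(A)$-linearity of the resulting isomorphism is automatic.
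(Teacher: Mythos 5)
Your proof is exactly the paper's: the paper states this corollary with the remark ``Taking cohomology on both sides of Theorem \ref{thm:main}, we obtain...'' and provides no further argument. Applying $H^n$, identifying $H^n\mrm{R}\opn{Hom}$ with $\opn{Ext}^n$ on both sides, and noting that $H^n$ is $Z(A)$-linear is precisely the intended content, so the proposal is correct and matches the paper.
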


\begin{cor}
Let $\k$ be a field, let $A$ be a $\k$-algebra, and assume that its center $Z(A)$ is essentially finite type over $\k$ and that $A$ is finite over $Z(A)$.
Then 
\[
\mrm{R}\opn{Hom}_{A\otimes_{\k} A^{\opn{op}}}(A,A\otimes_{\k} A^{\opn{op}}) \cong \mrm{R}\opn{Hom}_{A^{\opn{op}}}(R^{\opn{op}}_A,A)
\]
\end{cor}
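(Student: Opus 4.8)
The plan is to specialize Theorem~\ref{thm:main} (applicable by Corollary~\ref{cor:holds-for-finite}) to the bimodule $M \otimes_{\k} N$ with $M = A$ and $N = A^{\opn{op}}$, viewing $A$ as an object of $\mrm{D}^{\mrm{b}}_{\mrm{f}}(\opn{Mod} A)$ and $A^{\opn{op}}$ as an object of $\mrm{D}^{\mrm{b}}_{\mrm{f}}(\opn{Mod} A^{\opn{op}})$. The left-hand side of Theorem~\ref{thm:main} then reads $\mrm{R}\opn{Hom}_{A\otimes_{\k} A^{\opn{op}}}(A, A\otimes_{\k} A^{\opn{op}})$, which is exactly the left-hand side we want. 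It remains to identify the right-hand side.

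First I would compute the two inner $\mrm{R}\opn{Hom}$ terms. We have $\mrm{R}\opn{Hom}_A(A, R_A) \cong R_A$ as a complex in $\mrm{D}(\opn{Mod} A^{\opn{op}})$ (in fact with its full $A$-bimodule structure), and likewise $\mrm{R}\opn{Hom}_{A^{\opn{op}}}(A^{\opn{op}}, R^{\opn{op}}_A) \cong R^{\opn{op}}_A$ in $\mrm{D}(\opn{Mod} A)$. Substituting, the argument of the outer $\mrm{R}\opn{Hom}_{Z(A)}(-, R_{Z(A)})$ becomes $R^{\opn{op}}_A \otimes^{\mrm{L}}_{A^{\opn{op}}} R_A$. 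So at this stage the right-hand side is $\mrm{R}\opn{Hom}_{Z(A)}(R^{\opn{op}}_A \otimes^{\mrm{L}}_{A^{\opn{op}}} R_A, R_{Z(A)})$.

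Next I would use adjunction (Hom--tensor) over $A^{\opn{op}}$ together with the duality property of the rigid dualizing complex. The point is that $R^{\opn{op}}_A \otimes^{\mrm{L}}_{A^{\opn{op}}} R_A$ sits inside $\mrm{R}\opn{Hom}_{Z(A)}(-, R_{Z(A)})$, and since $R_A \cong \mrm{R}\opn{Hom}_{Z(A)}(A, R_{Z(A)})$ by condition~(3) of Theorem~\ref{thm:hoc-twist}, applying $\mrm{R}\opn{Hom}_{Z(A)}(-, R_{Z(A)})$ and commuting it past the derived tensor product should convert one copy of $R_A$ back into $A$, at the cost of turning the tensor product over $A^{\opn{op}}$ into an $\mrm{R}\opn{Hom}_{A^{\opn{op}}}$. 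Concretely, I expect
\[
\mrm{R}\opn{Hom}_{Z(A)}(R^{\opn{op}}_A \otimes^{\mrm{L}}_{A^{\opn{op}}} R_A, R_{Z(A)}) \cong \mrm{R}\opn{Hom}_{A^{\opn{op}}}(R^{\opn{op}}_A, \mrm{R}\opn{Hom}_{Z(A)}(R_A, R_{Z(A)})),
\]
and then $\mrm{R}\opn{Hom}_{Z(A)}(R_A, R_{Z(A)}) \cong \mrm{R}\opn{Hom}_{Z(A)}(\mrm{R}\opn{Hom}_{Z(A)}(A, R_{Z(A)}), R_{Z(A)}) \cong A$, using that $R_{Z(A)}$ is a dualizing complex over $Z(A)$ and $A\in \mrm{D}^{\mrm{b}}_{\mrm{f}}(\opn{Mod} Z(A))$ (which holds because $A$ is finite over $Z(A)$). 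This yields $\mrm{R}\opn{Hom}_{A^{\opn{op}}}(R^{\opn{op}}_A, A)$, as required.

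\textbf{Main obstacle.} The delicate point is bookkeeping the module structures so that the adjunction in the displayed isomorphism above is legitimate: $R_A$ carries an $A$-bimodule structure, so $\mrm{R}\opn{Hom}_{Z(A)}(R_A, R_{Z(A)})$ inherits compatible left and right $A$-actions, and one must check that under these the reflexivity isomorphism $\mrm{R}\opn{Hom}_{Z(A)}(R_A, R_{Z(A)}) \cong A$ is an isomorphism of $A$-bimodules (so that the outer $\mrm{R}\opn{Hom}_{A^{\opn{op}}}$ makes sense and the answer is $\mrm{R}\opn{Hom}_{A^{\opn{op}}}(R^{\opn{op}}_A, A)$ with the intended structure). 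This is essentially the same structural verification already carried out in the proof of Theorem~\ref{thm:hoc-twist}, using the involution on $A\otimes_{\k} A^{\opn{op}}$ and the symmetry of $R_{Z(A)}$ as a $Z(A)$-module, so it should go through, but it requires care to state cleanly.
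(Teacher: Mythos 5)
Your proposal matches the paper's proof essentially step by step: specialize Theorem~\ref{thm:main} to $M=A$, $N=A^{\opn{op}}$, apply derived hom--tensor adjunction, and collapse $\mrm{R}\opn{Hom}_{Z(A)}(R_A,R_{Z(A)})\cong A$ via biduality and the identification $R_A\cong\mrm{R}\opn{Hom}_{Z(A)}(A,R_{Z(A)})$. The structural bookkeeping concern you flag is reasonable but not elaborated in the paper either.
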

\begin{proof}
By Theorem \ref{thm:main}, we have that 
\[
\mrm{R}\opn{Hom}_{A\otimes_{\k} A^{\opn{op}}}(A,A\otimes_{\k} A^{\opn{op}}) \cong \mrm{R}\opn{Hom}_{Z(A)}(R^{\opn{op}}_A\otimes^{\mrm{L}}_{A^{\opn{op}}} R_A,R_{Z(A)}).
\]
By the derived hom-tensor adjunction,
\[
\mrm{R}\opn{Hom}_{Z(A)}(R^{\opn{op}}_A\otimes^{\mrm{L}}_{A^{\opn{op}}} R_A,R_{Z(A)}) \cong \mrm{R}\opn{Hom}_{A^{\opn{op}}}(R^{\opn{op}}_A,\mrm{R}\opn{Hom}_{Z(A)}(R_A,R_{Z(A)})).
\]
Since $R_A \cong \mrm{R}\opn{Hom}_{Z(A)}(A,R_{Z(A)})$, and since $R_{Z(A)}$ is a dualizing complex over $Z(A)$, we have that
\[
\mrm{R}\opn{Hom}_{Z(A)}(R_A,R_{Z(A)}) \cong \mrm{R}\opn{Hom}_{Z(A)}(\mrm{R}\opn{Hom}_{Z(A)}(A,R_{Z(A)}),R_{Z(A)}) \cong A,
\] 
which proves the claim.
\end{proof}

\begin{rem}
This result was previously proved in \cite[Proposition 5.11]{Ye2} for Gorenstein algebras which possess a rigid dualizing complex.
\end{rem}

\begin{que}
Is there a similar reduction formula for the the Hochschild homology 
\[
\mrm{HH}_n(A|\k;\opn{Hom}_{\k}(M,N))
\]
over noncommutative algebras, as in \cite[Theorem 4.1.8]{ILN} (or \cite[Theorem 4.6]{AILN}), where $M,N$ are complexes of left $A$-modules?
Theorem \ref{thm:main} and the twisting formalism of \cite{Sh}  suggests that a correct form for such a formula might be
\begin{equation}\label{eqn:conj}
A \otimes^{\mrm{L}}_{A\otimes_{\k} A^{\opn{op}}} \opn{Hom}_{\k}(M,N)) \cong \mrm{R}\opn{Hom}_{Z(A)}( \mrm{R}\opn{Hom}_{A^{\opn{op}}}( \mrm{R}\opn{Hom}_A(M,R_A),\mrm{R}\opn{Hom}_A(N,R_A)),R_{Z(A)}).
\end{equation}
The author was not able to prove such a formula, mainly because of the lack of an analogue of Lemma \ref{lem:sep} for complexes of the form $\opn{Hom}_{\k}(M,N)$. 
Taking $M=N=A$ in equation (\ref{eqn:conj}), one will get the following remarkable formula $R_A \cong A \otimes^{\mrm{L}}_{A\otimes_{\k} A^{\opn{op}}} \opn{Hom}_{\k}(A,A)$ for the rigid dualizing complex of $A$. This was shown to be true in the commutative case in \cite[Corollary 4.7]{AILN}.
\end{que}


\begin{thebibliography}{ncred}

\bibitem[AIL]{AIL} Avramov, L., Iyengar, S., and Lipman, J. (2010). Reflexivity and rigidity for complexes, I: Commutative rings. Algebra and Number Theory, 4(1), 47-86.
\bibitem[AILN]{AILN} Avramov, L. L., Iyengar, S. B., Lipman, J., and Nayak, S. (2010). Reduction of derived Hochschild functors over commutative algebras and schemes. Advances in Mathematics, 223(2), 735-772.
\bibitem[CE]{CE} H. Cartan, S. Eilenberg, Homological algebra, Princeton Univ. Press, Princeton, N.J., 1956.
\bibitem[ILN]{ILN} Iyengar, S. B., Lipman, J., and Neeman, A. (2013). Relation between two twisted inverse image pseudofunctors in duality theory. arXiv preprint arXiv:1307.7092.
\bibitem[Lo]{Lo} Loday, Jean-Louis. Cyclic homology. Vol. 301. Springer, 1998.
\bibitem[RD]{RD} R. Hartshorne, ``Residues and Duality,''
        Lecture Notes in Math.\ \textbf{20},
        Springer-Verlag, Berlin, 1966.
\bibitem[Sh]{Sh} Shaul, L. (2014). The Hochschild category of a commutative algebra via twisting. arXiv preprint arXiv:1401.6678.
\bibitem[VdB]{VdB} Van den Bergh, M. (1997). Existence theorems for dualizing complexes over non-commutative graded and filtered rings. Journal of Algebra, 195(2), 662-679.
\bibitem[YZ1]{YZ1} Yekutieli, A., and Zhang, J. J. (2005). Dualizing complexes and perverse modules over differential algebras. Compositio Mathematica, 141(03), 620-654.
\bibitem[YZ2]{YZ2} Yekutieli, A., and Zhang, J. J. (2009). Rigid dualizing complexes over commutative rings. Algebras and representation theory, 12(1), 19-52.
\bibitem[Ye1]{Ye1} Yekutieli, A. (1992). Dualizing complexes over noncommutative graded algebras. Journal of Algebra, 153(1), 41-84. 
\bibitem[Ye2]{Ye2} Yekutieli, A. (1999). Dualizing complexes, Morita equivalence and the derived Picard group of a ring. Journal of the London Mathematical Society, 60(3), 723-746.

\end{thebibliography}
\end{document}